\let\uml\"
\title{Faithfulness of the Fock representation of $C^*$-algebra generated by $q_{ij}$-commuting isometries} 
\author{Alexey Kuzmin and Nikolay Pochekai}  
\email{vagnard.k@gmail.com}
\keywords{}
\newcommand{\norm}[1]{\left\lVert#1\right\rVert}
\subjclass[2010]{}
\newtheorem{theorem}{Theorem}
\newtheorem{lemma}{Lemma}
\newtheorem{remark}{Remark}
\newtheorem{proposition}{Proposition}
\newtheorem{definition}{Definition}
\newtheorem{corollary}{Corollary}
\begin{document} 

\maketitle

\begin{abstract}
   We consider $C^*$-algebra $Isom_{q_{ij}}$ generated by $n$ isometries $a_1, \ldots, a_n$ satisfying the relations $a_i^* a_j = q_{ij} a_j a_i^*$ with $\max |q_{ij}| < 1$. This $C^*$-algebra is shown to be nuclear. We prove that the Fock representation of $Isom_{q_{ij}}$ is faithful. Further we describe an ideal in $Isom_{q_{ij}}$ which is isomorphic to the algebra of compact operators.
\end{abstract}

\tableofcontents


\section{Introduction}
Operator algebras generated by various deformations of the canonical commutation relations (CCR) have been extensively studied in the last two decades. In particular, a considerable attention has been paid to the study of so-called $q_{ij}$-CCR introduced by M. Bozejko and R. Speicher, see \cite{Bozejko_brownian_motion}. Namely, $q_{ij}$-CCR is a $*$-algebra generated by $a_i, a_i^*$, $i = 1, \ldots, n$, which satisfy the following relations
\[ a_i^* a_i = 1 + q_{ii} a_i a_i^*, \text{ } |q_{ii}| < 1, \text{ } q_{ii} \in \mathbb{R}, \]
\[ a_i^* a_j = q_{ij} a_j a_i^* \text{ for } i \neq j, \text{ } |q_{ij}| \leq 1, \text{ } \overline{q_{ji}} = q_{ij}. \]
It is a deformation of $*$-algebras of the classical commutation relations in the sense that in the Fock realisation the limiting cases $q_{ij} = 1$ and $q_{ij} = -1$ correspond to $*$-algebras of the canonical commutation
relations (CCR) and the canonical anti-commutation relations (CAR) respectively.

Let us describe the main results on the subject:

The Fock representation $\pi_F$ of $q_{ij}$-CCR is the special one, determined uniquely up to a unitary equivalence by the following property: there exists a cyclic vector $\Omega$ such that $ \pi_F(a_i^*)(\Omega) = 0$ for $i = 1, \ldots, n $. The problem of existence and uniqueness of $\pi_F$ was studied in \cite{Fivel_positivity}, \cite{Bozejko_brownian_motion}, \cite{Zagier_positivity} and in \cite{Jorgensen_positivity} for a more general class of Wick algebras.
    
It can be easily verified that in any $*$-representation $\pi$ of $q_{ij}$-$CCR$ by bounded operators one has
\[ \norm{\pi(a_i)} \leq \frac{1}{\sqrt{1-|q_{ii}|}}, \text{ } i = 1, \ldots, n. \]
Hence there exists a universal enveloping $C^*$-algebra associated to $q_{ij}$-CCR denoted below by $q_{ij}$-$\mathcal{CCR}$.

\thispagestyle{firststyle}

When $\max |q_{ij}| < 1$ it is natural to think of $q_{ij}$-$\mathcal{CCR}$ as a deformation of the Cuntz-Toeplitz algebra $\mathcal{KO}_n$. Recall that $\mathcal{KO}_n$ is the universal $C^*$-algebra generated by $s_i, s_i^*$, $i = 1, \ldots, n$ subject to the relations \[ s_i^* s_j = \delta_{ij}1, \text{ } i, j = 1,\ldots, n.\] This point of view was justified by P. E. T. Jorgensen, L. M. Schmidt and R. F. Werner who showed that $q_{ij}$-$\mathcal{CCR} \simeq \mathcal{KO}_n$ whenever $\max |q_{ij}| < \sqrt{2} - 1$, see \cite{Jorgensen_stability} for more details. It is a conjecture that $q_{ij}$-$\mathcal{CCR} \simeq \mathcal{KO}_n$ whenever $\max |q_{ij}| < 1$.

Many authors were also interested in the study of $C^*$-algebra generated by operators of the Fock representation of $q_{ij}$-$CCR$. Namely, K. Dykema and A. Nica in \cite{Dykema_Fock} proved that $\pi_F(q_{ij}$-$\mathcal{CCR})$ $ \simeq \mathcal{KO}_n$ for slightly larger value of $\max |q_{ij}|$. Also an embedding of $\mathcal{KO}_n$ into $\pi_F(q_{ij}$-$\mathcal{CCR})$ was constructed for any values of deformation parameters with $\max |q_{ij}| < 1$.

Later M. Kennedy in \cite{Kennedy_exactness} showed the existence of an embedding of $\pi_F(q_{ij}$-$\mathcal{CCR})$ into $\mathcal{KO}_n$ and proved that $\pi_F(q_{ij}$-$\mathcal{CCR})$ is an exact $C^*$-algebra.

Let us stress out that results concerning $\pi_F(q_{ij}$-$\mathcal{CCR})$ cannot be automatically lifted to the universal $C^*$-algebra level since at the moment we don't know whether or not $\pi_F$ is a faithful $*$-representation of $q_{ij}$-$\mathcal{CCR}$ for any $|q_{ij}| < 1$. However $\pi_F$ is a faithful representation of $q_{ij}$-$CCR$, i.e. it is faithful on the $*$-algebraic level, see \cite{Proskur_faithfulness}.

Some boundary cases of $q_{ij}$-$\mathcal{CCR}$ corresponding to $|q_{ii}| < 1$, $i = 1,\ldots,n$, $|q_{ij}| = 1$, $i \neq j$ were studied in \cite{Proskur_stability2}, \cite{Proskur_Iksanov}, see also \cite{Weber}. For these values of $q_{ij}$ it was shown that $q_{ij}$-$\mathcal{CCR} \simeq Isom_{q_{ij}}$ where $Isom_{q_{ij}}$ is a $*$-algebra generated by isometries $a_1, \ldots, a_n$ such that $a_i^* a_j = q_{ij} a_j a_i^*$, $i \neq j$. It was also shown that for the values of parameters $q_{ij}$ specified above, the $C^*$-algebra $Isom_{q_{ij}}$ is nuclear and its Fock representation is faithful.
    
Notice, that the $C^*$-algebras $Isom_{q_{ij}}$ with $\max |q_{ij}| < 1$ have completely different structure than in the case of $|q_{ij}| = 1$, $i \neq j$. It was shown in \cite{Proskur_stability2} that for two generators and $|q_1| = |q_2| = 1$, $Isom_{q_1} \simeq Isom_{q_2}$ if and only if $q_1 = q_2$ or $q_1 = \overline{q_2}$. If $|q| < 1$ one has $Isom_q \simeq \mathcal{KO}_2$ as established in \cite{Proskurin_q_isom}. For more than two generators the problem of isomorphism of $Isom_{q_{ij}}$ and $\mathcal{KO}_n$ with $\max |q_{ij}| < 1$ is still open.

In this paper we study the $C^*$-algebras $Isom_{q_{ij}}$ with $\max|q_{ij}| < 1$, $i, j = 1, \ldots, n$.

When a $C^*$-algebra admits an action of a compact group, the study of some of its properties can be reduced to the level of fixed point subalgebra by this action. In Section 3 we discuss conditions for a compact group to define a filtration preserving action on $q_{ij}$-$\mathcal{CCR}$ (and in particular on $Isom_{q_{ij}}$). We find the largest admissible group acting on $q_{ij}$-$\mathcal{CCR}$ regardless of the values of $q_{ij}$ - it happens to be the n-dimensional torus $\mathbb{T}^n$.

In Section 4 we begin to study the fixed point subalgebra of $Isom_{q_{ij}}$ with respect to the action of $\mathbb{T}^n$ named $\mathcal{GICAR}_{q_{ij}}$. This subalgebra turns out to be an AF algebra. As a consequence it will follow that $Isom_{q_{ij}}$ is nuclear.

In Section 5 we describe the Bratteli diagram of $\mathcal{GICAR}_{q_{ij}}$ and conclude its independence of the values of $q_{ij}$. Hence, we get an isomorphism of the fixed point subalgebras of $Isom_{q_{ij}}$ and $\mathcal{KO}_n$.

Another problem which can be reduced to the fixed point subalgebra level is faithfulness of a $*$-homomorphism. Using information on structure of the fixed point subalgebra, we prove in Section 6 that the Fock representation of $Isom_{q_{ij}}$, $\max |q_{ij}| < 1$, is faithful. This allows us to extend results and techniques of \cite{Dykema_Fock}, \cite{Kennedy_exactness} to the case of $Isom_{q_{ij}}$.

In Section 7 we prove the existence of an ideal $\mathcal{K} \subset Isom_{q_{ij}}$ isomorphic to the algebra of compact operators and describe a generator of this ideal as a projection in some finite-dimensional subalgebra of $Isom_{q_{ij}}$.

\section{Deformed Fock inner product}
As it was mentioned in the introduction, the $q_{ij}$-deformed Fock representation was a subject of numerous studies. For our purpose we only need a few basic facts about the structure of the Fock representation space of $q_{ij}$-$CCR$.

Let $\mathcal{H} = \mathbb{C}^n$ and $\{\xi_1,\ldots,\xi_n \}$ be its orthonormal basis. Consider the full tensor space
\[ T(\mathcal{H}) = \{ \Omega \} \oplus \bigoplus_{k \geq 1} \mathcal{H}^{\otimes k}. \]
Put $\mathcal{H}^{\otimes k} \perp \mathcal{H}^{\otimes l}$, $k \neq l$, and supply each $\mathcal{H}^{\otimes k}$ with inner product $\langle \cdot, \cdot \rangle_{Fock}$ specified below, see \cite{Bozejko_brownian_motion} for more details. Namely,

\begin{gather*}
    \langle \Omega, \Omega \rangle_{Fock} = 1, \\
    \langle \xi_{i_1} \otimes \ldots \otimes \xi_{i_k}, \xi_{j_1} \otimes \ldots \otimes \xi_{j_k} \rangle_{Fock} = \\ = \sum_{t = 1}^k \delta_{j_1 i_t} q_{j_1 i_1} \ldots q_{j_1 i_{t - 1}} \langle \xi_{i_1} \otimes \ldots \otimes \widehat{\xi_{i_t}} \otimes \ldots \otimes \xi_{i_k}, \xi_{j_2} \otimes \ldots \otimes \xi_{j_k} \rangle_{Fock},
\end{gather*}
where $1 \leq i_1, \ldots, i_k, j_1, \ldots, j_k \leq n$, and the hat over $\xi_{i_t}$ means that $\xi_{i_t}$ is deleted from the
tensor. Note that the natural basis $\xi_{i_1} \otimes \ldots \otimes \xi_{i_k}$ of $\mathcal{H}^{\otimes k}$ is not orthogonal with respect to $\langle \cdot, \cdot \rangle_{Fock}$.

\section{Compact group actions on $q_{ij}$-$\mathcal{CCR}$}

In this part of our paper we discuss symmetries of $q_{ij}$-$\mathcal{CCR}$ and explain how can they be useful in the study of this $C^*$-algebra. First, recall a definition of a group action.

\begin{definition}
Let $G$ be a compact group, $A$ be a $C^*$-algebra.
\begin{enumerate}
    \item An action of $G$ on $A$ is a homomorphism $\gamma : G \rightarrow Aut(A)$ which is continuous in the point-norm topology.
    \item The fixed point subalgebra $A^\gamma$ is a subset of all $a \in A$ such that $\gamma_g(a) = a$ for all $g \in G$.
\end{enumerate}
\end{definition}

Recall that for every action of a compact group $G$ on a $C^*$-algebra $A$ one can construct a faithful conditional expectation $E_{\gamma} : A \rightarrow A^\gamma$ onto the fixed point subalgebra, given by
\[ E_{\gamma}(a) = \int_{G} \gamma_g(a) d \lambda, \]
where $\lambda$ is the Haar measure on $G$.

The following proposition explains our interest in studying the fixed point subalgebras.

\begin{proposition}
    
(\cite{Ozawa} Section 4.5, Theorem 1, 2)

\begin{enumerate}
    \item Let $\gamma$ be an action of a compact group $G$ on a $C^*$-algebra $A$. Then $A$ is nuclear if and only if $A^\gamma$ is nuclear.
    \item Let $\alpha, \beta$ be actions of a compact group $G$ on $C^*$-algebras $A$ and $B$ respectively and $\pi : A \rightarrow B$ is a $*$-homomorphism such that 
    \[ \pi \circ \alpha_g = \beta_g \circ \pi \text{ for any } g \in G.\] Then $\pi$ is injective on $A$ if and only if $\pi$ is injective on $A^\alpha$.
\end{enumerate}
\end{proposition}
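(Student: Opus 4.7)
The plan is to route both statements through the faithful conditional expectation $E_\gamma$ recalled just before the proposition, which has the crucial property that $E_\gamma(a^*a) = 0$ implies $a = 0$. I would handle part (2) first since it is essentially a soft manipulation of expectations, whereas part (1) requires genuine $C^*$-algebraic machinery, and I expect the ``if'' direction of (1) to be the main obstacle.

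For part (2), the key observation is that $\pi$ intertwines the two conditional expectations: for any $a \in A$,
\[ \pi(E_\alpha(a)) = \pi\bigl(\int_G \alpha_g(a)\, d\lambda\bigr) = \int_G \beta_g(\pi(a))\, d\lambda = E_\beta(\pi(a)), \]
using the intertwining hypothesis $\pi \circ \alpha_g = \beta_g \circ \pi$ together with continuity and linearity of $\pi$. Now assume $\pi|_{A^\alpha}$ is injective and take $a \in A$ with $\pi(a) = 0$. Then $\pi(a^*a) = 0$, so $\pi(E_\alpha(a^*a)) = E_\beta(\pi(a^*a)) = 0$. Since $E_\alpha(a^*a) \in A^\alpha$ and $\pi$ is injective there, we get $E_\alpha(a^*a) = 0$, and faithfulness of $E_\alpha$ on positive elements forces $a^*a = 0$, hence $a = 0$. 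The reverse implication is trivial.

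For part (1), the ``only if'' direction is standard: if $\phi_n : A \to M_{k_n}$ and $\psi_n : M_{k_n} \to A$ exhibit a CPAP for $A$, then $\phi_n|_{A^\gamma}$ together with $E_\gamma \circ \psi_n$ exhibit a CPAP for $A^\gamma$, using that conditional expectations are completely positive contractions. The ``if'' direction is harder because one must deduce nuclearity of $A$ from nuclearity of a potentially much smaller subalgebra, and my approach would proceed through the crossed product. Since $G$ is compact and therefore amenable, $A$ is nuclear if and only if $A \rtimes_\gamma G$ is nuclear. For a compact group action the Green--Rieffel imprimitivity theorem identifies $A \rtimes_\gamma G$ up to Morita equivalence with a hereditary subalgebra of $A^\gamma \otimes \mathcal{K}(L^2(G))$; since nuclearity is preserved under tensoring with $\mathcal{K}$, under passage to hereditary subalgebras, and under Morita equivalence, the chain
\[ A^\gamma \text{ nuclear} \;\Rightarrow\; A^\gamma \otimes \mathcal{K}(L^2(G)) \text{ nuclear} \;\Rightarrow\; A \rtimes_\gamma G \text{ nuclear} \;\Rightarrow\; A \text{ nuclear} \]
closes the argument. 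The delicate point is the Morita equivalence step, which requires the full Green--Rieffel machinery and some care when the action fails to be saturated; everything else is a standard permanence property of nuclearity.
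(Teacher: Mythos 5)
The paper gives no proof of this proposition at all --- it is quoted verbatim from the reference \cite{Ozawa} (Section 4.5) --- so your attempt is being measured against the standard arguments rather than against anything in the text. With that said: your part (2) is correct and is exactly the standard argument (intertwining of the conditional expectations plus faithfulness of $E_\alpha$ on positive elements), and the ``only if'' direction of part (1) is also fine, since composing a c.p.a.p.\ for $A$ with the c.c.p.\ map $E_\gamma$ yields a c.p.a.p.\ for $A^\gamma$.

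The ``if'' direction of part (1) --- which is the implication the paper actually needs, to pass from nuclearity of $\mathcal{GICAR}_{q_{ij}}$ to nuclearity of $Isom_{q_{ij}}$ --- contains a genuine gap: the Morita relation you invoke points the wrong way. What the Rosenberg/Green--Rieffel theory of compact group actions gives is $A^\gamma \cong p(A\rtimes_\gamma G)p$ for the averaging projection $p=\int_G u_g\,d\lambda(g)\in M(A\rtimes_\gamma G)$, i.e.\ $A^\gamma$ is Morita equivalent to an \emph{ideal of} $A\rtimes_\gamma G$ (to all of it only when the action is saturated). This transports nuclearity from $A\rtimes_\gamma G$ down to $A^\gamma$ --- the direction you already have --- and gives nothing for the converse. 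Your claim that $A\rtimes_\gamma G$ is Morita equivalent to a hereditary subalgebra of $A^\gamma\otimes\mathcal{K}(L^2(G))$ fails already for the trivial action of $\mathbb{T}$ on $\mathbb{C}$: there $A^\gamma\otimes\mathcal{K}(L^2(\mathbb{T}))\cong\mathcal{K}(L^2(\mathbb{T}))$, all of whose hereditary subalgebras are elementary (hence simple), while $\mathbb{C}\rtimes\mathbb{T}\cong C^*(\mathbb{T})\cong c_0(\mathbb{Z})$ is not Morita equivalent to any simple algebra. Moreover the final link ``$A\rtimes_\gamma G$ nuclear $\Rightarrow A$ nuclear'' is not a routine permanence property for non-discrete compact $G$, because $A$ is not a subalgebra of $A\rtimes_\gamma G$ and carries no conditional expectation from it; one needs Imai--Takai duality $(A\rtimes_\gamma G)\rtimes_{\hat\gamma}\hat G\cong A\otimes\mathcal{K}(L^2(G))$ or an equivalent argument. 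The ingredient your sketch is missing is the Peter--Weyl/spectral-subspace analysis of $A$ over $A^\gamma$: the isotypic components $A_\pi$ (images of $a\mapsto d_\pi\int_G\overline{\chi_\pi(g)}\gamma_g(a)\,d\lambda(g)$) are bimodules linking hereditary subalgebras of $A^\gamma\otimes M_{d_\pi}$ to $A$, and completely positive approximations of $\mathrm{id}_A$ are assembled from those of $A^\gamma$ by Fej\'er-type summation over finite sets of irreducible representations. Without something of that kind, the hard implication is not established.
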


Our aim is to find a way to generate appropriate actions on $q_{ij}$-$\mathcal{CCR}$. Recall that Cuntz-Toeplitz algebra $\mathcal{KO}_n$ admits natural filtration preserving action for every closed subgroup of the unitary group $U_n$. For the readers convenience we put below a short prove of this well-known fact.

\begin{proposition}
Let $G$ be a closed subgroup of $U_n$ and $s_i$, $i = 1,\ldots,n$ are generators of $\mathcal{KO}_n$. Then there exists an action $\gamma$ of $G$ on $\mathcal{KO}_n$ such that
\[\gamma_g(s_i) = \sum_{j = 1}^n s_j g_{ji}, \text{ } g = (g_{ij}) \in G. \]

\end{proposition}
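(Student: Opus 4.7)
The plan is to invoke the universal property of $\mathcal{KO}_n$. For each $g \in G$ define $t_i := \sum_{j=1}^n s_j g_{ji}$, and verify the Cuntz--Toeplitz relations for the $t_i$. A direct computation gives
\[
t_i^* t_k = \sum_{j,l} \overline{g_{ji}}\, s_j^* s_l\, g_{lk} = \sum_{j,l} \overline{g_{ji}}\, \delta_{jl}\, g_{lk} = \sum_{j} \overline{g_{ji}}\, g_{jk} = (g^*g)_{ik} = \delta_{ik},
\]
since $g \in U_n$. Hence by universality there exists a unique $*$-homomorphism $\gamma_g : \mathcal{KO}_n \to \mathcal{KO}_n$ with $\gamma_g(s_i) = t_i$.

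Next I would check that $g \mapsto \gamma_g$ is a group homomorphism. It is enough to verify $\gamma_{gh}(s_i) = \gamma_g(\gamma_h(s_i))$ on generators, which is a straightforward matrix multiplication using the formula for $\gamma_g(s_i)$. Since $\gamma_e$ is the identity, each $\gamma_g$ is then an automorphism with inverse $\gamma_{g^{-1}}$.

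The main technical step is continuity in the point-norm topology. Here I would proceed in two stages. First, note that $\|s_j\| = 1$, so for a generator
\[
\|\gamma_g(s_i) - \gamma_h(s_i)\| \leq \sum_{j=1}^n |g_{ji} - h_{ji}|,
\]
which tends to zero as $h \to g$ in $U_n$. By submultiplicativity and linearity, the map $g \mapsto \gamma_g(a)$ is then norm-continuous for every $a$ in the dense $*$-subalgebra $\mathcal{A}_0 \subset \mathcal{KO}_n$ generated by $s_1,\ldots,s_n$. Second, pass to a general $a \in \mathcal{KO}_n$ by a standard $3\varepsilon$-argument: given $\varepsilon > 0$, pick $a_0 \in \mathcal{A}_0$ with $\|a - a_0\| < \varepsilon/3$; since each $\gamma_g$ is a $*$-homomorphism and hence contractive, $\|\gamma_g(a) - \gamma_g(a_0)\| < \varepsilon/3$ uniformly in $g$, so continuity at $g_0$ for $a_0$ transfers to $a$.

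I do not expect any deep obstacles; the computation of the Cuntz--Toeplitz relations uses only the unitarity of $g$, and the continuity step is the standard reduction to a dense $*$-subalgebra via the contractivity of $*$-homomorphisms.
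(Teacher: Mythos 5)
Your proof is correct and follows essentially the same route as the paper: verify the Cuntz--Toeplitz relations for $t_i = \sum_j s_j g_{ji}$ using unitarity of $g$, then invoke the universal property to get the $*$-homomorphism, with invertibility coming from $\gamma_{g^{-1}}$. You additionally spell out the group-homomorphism property and the point-norm continuity via the standard dense-subalgebra argument, which the paper leaves implicit.
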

\begin{proof}
One has to examine that $\gamma_g(s_i)$, $i = 1,\ldots,n$, satisfy the basic relations of $\mathcal{KO}_n$. Indeed,
\[ \gamma_g(s_i^*) \gamma_g(s_j) = \sum_{k, l = 1}^n s_k^*s_l \overline{g_{ki}} g_{lj} = \sum_{k, l = 1}^n \delta_{kl} \overline{g_{ki}} g_{lj} = \sum_{k = 1}^n \overline{g_{ki}} g_{kj} = \delta_{ij}. \]

Thus, there is a correctly defined $*$-homomorphism $\gamma_g : \mathcal{KO}_n \rightarrow \mathcal{KO}_n$. Since $g$ is unitary, this $*$-homomorphism is an automorphism.

\end{proof}

Our goal is to verify when the construction above is applicable to  $q_{ij}$-$\mathcal{CCR}$. Unlike $\mathcal{KO}_n$, not every closed subgroup of $U_n$ defines a correct action on $q_{ij}$-$\mathcal{CCR}$. In the next lemma we state the conditions for an element of $U_n$ to define an automorphism of $q_{ij}$-$\mathcal{CCR}$.

\begin{lemma}
Suppose $u \in U_n$, $a_i, i = 1,\ldots,n$ are generators of $q_{ij}$-$\mathcal{CCR}$ and
\[ a_i' = \sum_{j = 1}^n a_j u_{ji}, \text{ } i = 1,\ldots,n. \]
Then the relations 
\[ a_i'^*a_j' - q_{ij} a_j' a_i'^* = \delta_{ij}, \text{ } i, j = 1, \ldots, n\] hold if and only if 
\[ q_{kl}\overline{u_{ki}} u_{lj} = q_{ij} u_{lj} \overline{u_{ki}}, \text{ } i, j, k, l = 1, \ldots, n. \]
\end{lemma}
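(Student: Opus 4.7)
The plan is to expand $(a_i')^* a_j' - q_{ij} a_j' (a_i')^*$ using the definition of $a_i'$, the adjoint $(a_i')^* = \sum_k \overline{u_{ki}} a_k^*$, and the defining relations of $q_{ij}$-$\mathcal{CCR}$ written uniformly as $a_k^* a_l = \delta_{kl} + q_{kl} a_l a_k^*$ (with $q_{kk}$ the diagonal deformation parameter). Applying this to
\[(a_i')^* a_j' = \sum_{k,l} \overline{u_{ki}} u_{lj}\, a_k^* a_l\]
collapses the scalar part to $\sum_k \overline{u_{ki}} u_{kj} = \delta_{ij}$ by unitarity of $u$, and leaves $\sum_{k,l} q_{kl} \overline{u_{ki}} u_{lj}\, a_l a_k^*$. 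Computing $q_{ij} a_j' (a_i')^* = \sum_{k,l} q_{ij} u_{lj} \overline{u_{ki}}\, a_l a_k^*$ directly and subtracting, the claim reduces to whether
\[\sum_{k,l} \bigl(q_{kl} \overline{u_{ki}} u_{lj} - q_{ij} u_{lj} \overline{u_{ki}}\bigr) a_l a_k^* = 0\]
holds in $q_{ij}$-$\mathcal{CCR}$ for each fixed $i,j$.

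The nontrivial direction of the equivalence then becomes a linear-independence statement: if $\sum_{k,l} c_{kl}\, a_l a_k^* = 0$ in $q_{ij}$-$\mathcal{CCR}$, then each $c_{kl}$ vanishes. I would verify this via the Fock representation, which is well known to be $*$-algebraically faithful on the defining relations. On the single-particle basis vector $\xi_m$ one has $a_k^* \xi_m = \delta_{km} \Omega$ and hence $a_l a_k^* \xi_m = \delta_{km} \xi_l$; applying $\sum_{k,l} c_{kl}\, a_l a_k^*$ to $\xi_m$ yields $\sum_l c_{ml}\, \xi_l = 0$, which forces $c_{ml} = 0$ for every $m, l$.

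With linear independence in hand, the relations $a_i'^* a_j' - q_{ij} a_j' a_i'^* = \delta_{ij}$ are equivalent to the vanishing of each coefficient $q_{kl} \overline{u_{ki}} u_{lj} - q_{ij} u_{lj} \overline{u_{ki}}$. Since scalars commute, this is exactly the condition displayed in the statement. The only potentially delicate step is the linear-independence argument, but the one-particle computation above disposes of it immediately, and the rest of the proof is a direct substitution.
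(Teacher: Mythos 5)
Your proof is correct and follows essentially the same route as the paper: expand $(a_i')^*a_j' - q_{ij}a_j'(a_i')^*$ using $a_k^*a_l = \delta_{kl} + q_{kl}a_la_k^*$, collapse the scalar part to $\delta_{ij}$ by unitarity, and reduce the equivalence to linear independence of the monomials $a_la_k^*$. The only difference is how that independence is justified --- the paper appeals to the Diamond-lemma basis $\{a_\mu a_\sigma^*\}$ of $q_{ij}$-$CCR$ (its Proposition 3), whereas you verify it concretely by acting on the one-particle vectors $\xi_m$ in the Fock representation, which is equally valid (and needs only that the Fock representation exists, not that it is faithful).
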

\begin{proof}
Suppose that 
\[ a_i'^*a_j' - q_{ij} a_j' a_i'^* = \delta_{ij}, \text{ for any } i, j = 1, \ldots, n.\]
Then
\begin{equation*}
\begin{split}
\delta_{ij} = a_i'^*a_j' - q_{ij} a_j' a_i'^* & = \sum_{k, l} (a_k^* a_l \overline{u_{ki}} u_{lj} - q_{ij} a_l a_k^* u_{lj} \overline{u_{ki}})  \\
 & = \sum_{k, l} ((\delta_{lk} 1 + q_{kl} a_l a_k^*) \overline{u_{ki}} u_{lj} - q_{ij} a_l a_k^* u_{lj} \overline{u_{ki}} )
\\
 & = \sum_{k} \overline{u_{ki}} u_{kj} + \sum_{k, l} (q_{kl} a_l a_k^* \overline{u_{ki}} u_{lj} - q_{ij} a_l a_k^* \overline{u_{ki}} u_{lj})
\\
 & = \delta_{ij} + \sum_{k, l} a_l a_k^* \overline{u_{ki}} u_{lj} (q_{kl} - q_{ij}).
\end{split}
\end{equation*}
The linear independency of $a_l a_k^*$, $l,k = 1,\ldots,n$, implies the claim.

\end{proof}

Notice that if groups $G_1, G_2 \subset U_n$ act on $q_{ij}$-$\mathcal{CCR}$ as in Lemma 1 and $G_1 \leq G_2$ then the fixed point subalgebra by the action of $G_2$ is included into the fixed point subalgebra by the action of $G_1$. Hence we would like to act by the largest possible subgroup of $U_n$ to make the fixed point subalgebra as small as possible. In the following theorem we prove that the largest subgroup exists for arbitrary choice of parameters $\{ q_{ij} \}$.

\begin{theorem}
    Let $G = \{ u \in U_n \text{ } | \text{ }q_{kl}\overline{u_{ki}} u_{lj} = q_{ij} u_{lj} \overline{u_{ki}}, \text{ }  i, j, k, l = 1,\ldots,n \} $. Then $G$ is a closed subgroup of $U_n$. 
\end{theorem}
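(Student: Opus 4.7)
The plan has three parts: $G$ is closed, contains the identity, and is closed under multiplication and inversion. Since the $q_{ij}$ are complex scalars, the defining condition can be rewritten as $(q_{kl} - q_{ij})\overline{u_{ki}}u_{lj} = 0$, a system of polynomial equations in the real and imaginary parts of the entries of $u$; its zero set meets $U_n$ (already closed in $M_n(\mathbb{C})$) in a closed subset, and the identity $u_{ij} = \delta_{ij}$ trivially satisfies the equations.

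For closure under multiplication, I would take $u, v \in G$ and indices with $q_{kl} \neq q_{ij}$, and expand
\[ \overline{(uv)_{ki}}(uv)_{lj} = \sum_{m, p} (\overline{u_{km}}u_{lp})(\overline{v_{mi}}v_{pj}), \]
arguing term by term: for each $(m, p)$, either $q_{mp} \neq q_{kl}$, which forces $\overline{u_{km}}u_{lp} = 0$ by $u \in G$, or $q_{mp} = q_{kl} \neq q_{ij}$, which forces $\overline{v_{mi}}v_{pj} = 0$ by $v \in G$. For closure under inversion, $u^{-1} = u^*$ gives $(u^{-1})_{ki} = \overline{u_{ik}}$, so $u^{-1} \in G$ amounts to $(q_{kl} - q_{ij})\overline{u_{jl}}u_{ik} = 0$; substituting the free indices $(i_0, j_0, k_0, l_0) = (l, k, j, i)$ in the condition for $u$ produces $(q_{ji} - q_{lk})\overline{u_{jl}}u_{ik} = 0$, and the hermiticity relation $\overline{q_{\beta\alpha}} = q_{\alpha\beta}$ from the introduction makes $q_{kl} - q_{ij}$ vanish exactly when $q_{ji} - q_{lk}$ does, so the two equalities carry the same content.

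The main point requiring care is the inversion step, where the hermiticity of the parameter matrix is precisely what keeps the defining condition invariant under adjoints. A more conceptual alternative, useful as a sanity check, is to combine Lemma 1 with the universal property of $q_{ij}$-$\mathcal{CCR}$: $u \in G$ if and only if the transformation $a_i \mapsto \sum_j a_j u_{ji}$ extends to a $*$-endomorphism of $q_{ij}$-$\mathcal{CCR}$, necessarily an automorphism $\gamma_u$ by unitarity, after which the subgroup properties follow from the composition identities $\gamma_u \gamma_v = \gamma_{uv}$ and $\gamma_u^{-1} = \gamma_{u^{-1}}$.
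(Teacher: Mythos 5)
Your proof is correct and follows essentially the same route as the paper: direct verification that the identity satisfies the scalar condition, that products satisfy it by a termwise application of the conditions for $u$ and $v$, and that $u^{-1}=u^*$ satisfies it by permuting the free indices in the condition for $u$. The only cosmetic difference is in the inversion step, where you invoke the hermiticity $\overline{q_{ji}}=q_{ij}$ after the substitution $(l,k,j,i)$, while the paper's substitution $(k,l,i,j)$ reaches the same conclusion without needing it; both are valid.
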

\begin{proof}

\quad

1) Let $u$ be the identity matrix. Then \[ q_{kl}\overline{u_{ki}} u_{lj} = q_{kl} \delta_{ki} \delta_{lj} = q_{ij} \delta_{lj} \delta_{ki} = q_{ij} u_{lj} \overline{u_{ki}}. \]

2) Suppose $u \in G$. We prove that $u^{-1} = u^* \in G$:

If $u_{ik}$ or $u_{jl}$ equal to 0 then trivially 
\[ q_{kl}\overline{u^*_{ki}} u^*_{lj} = q_{ij} u^*_{lj} \overline{u^*_{ki}}. \]
Otherwise, for $u \in G$ we know that
\[q_{ij}\overline{u_{ik}} u_{jl} = q_{kl} u_{jl} \overline{u_{ik}}, \]
so $q_{kl} = q_{ij}$ and 
\[ q_{kl}\overline{u^*_{ki}} u^*_{lj} = q_{ij} u^*_{lj} \overline{u^*_{ki}}. \]

3) Suppose $u, v \in G$. Let $h = uv$.

As $u, v \in G$ we get
\[ q_{kl} \overline{u_{k\alpha}} u_{l\beta} = q_{\alpha \beta
} u_{l \beta} \overline{u_{k\alpha}}, \text{ } q_{\alpha \beta
} \overline{v_{\alpha i}} v_{\beta j} = q_{ij} v_{\beta
 j} \overline{v_{\alpha i}}  \]
for $\alpha,\beta = 1,\ldots,n$. Then

\begin{equation*}
\begin{split}
q_{kl} \overline{h_{ki}} h_{lj} = & \sum_{\alpha,\beta} q_{kl} \overline{u_{k\alpha}} \overline{v_{\alpha i}} u_{l\beta} v_{\beta j} = \\ & = \sum_{\alpha,\beta} q_{\alpha \beta} u_{l\beta} \overline{v_{\alpha i}} \overline{u_{k\alpha}} v_{\beta j}  = \\ & = \sum_{\alpha,\beta} q_{ij} u_{l\beta} v_{\beta j} \overline{u_{k\alpha}} \overline{v_{\alpha i}} = q_{ij} h_{lj} \overline{h_{ki}}.
\end{split}
\end{equation*}

\end{proof}

Consider the subgroup of diagonal matrices in $U_n$. It is isomorphic to $\mathbb{T}^n$. This subgroup has the following special properties:
\begin{enumerate}
    \item It defines a correct action on every $q_{ij}$-$\mathcal{CCR}$ regardless of the choice of the parameters. Namely, if $u$ is diagonal then
    \[q_{kl} \overline{u_{ki}} u_{lj} = q_{kl} \delta_{ki} \delta_{lj} \overline{u_{ki}} u_{lj} = q_{ij} \delta_{lj} \delta_{ki} u_{lj} \overline{u_{ki}} = q_{ij} u_{lj} \overline{u_{ki}}. \] 
    \item There exists a special choice of $\{ q_{ij} \}$ such that the group defined in the Theorem 1 is isomorphic to $\mathbb{T}^n$. Indeed, take arbitrary $\{ q_{ij} \}$ with $q_{ii} \neq q_{jj}$ whenever $i \neq j$. Then in particular 
    \[ \overline{u_{ji}} u_{ji} (q_{jj} - q_{ii}) = 0. \]
    Hence $u_{ji} = 0$ for $j \neq i$.
\end{enumerate}

We denote the action of $\mathbb{T}^n$ defined above by $\Psi$. For $\mathbf{w} = (w_1, \ldots, w_n) \in \mathbb{T}^n$ let $\Psi_{\mathbf{w}} : q_{ij}\text{-}\mathcal{CCR} \rightarrow q_{ij}\text{-}\mathcal{CCR}$, \[\Psi_{\mathbf{w}}(a_i) = w_i a_i, \text{ } i = 1,\ldots, n,\]
be the corresponding automorphism.

\section{Fixed point subalgebras and nuclearity}

In this Section we describe the fixed point subalgebra for the action $\Psi$ on $q_{ij}$-$\mathcal{CCR}$ for any values of $q_{ij}$ such that $\max |q_{ij}| < 1$. Then for $Isom_{q_{ij}}$ we prove that the fixed point subalgebra is AF. 

Recall that applying $q_{ij}$-relations, any monomial in $a_i, a_i^*$ can be brought into a normal form with all starred operators to the right of all unstarred ones. For multi-index $\mu = (\mu_1, \ldots, \mu_k)$, $\mu_i \in \{1, \ldots, n \}$ we denote \[ a_\mu = a_{\mu_1} \ldots a_{\mu_k}. \]
The following result follows obviously from the Diamond lemma (see \cite{Diamond_lemma}).
\begin{proposition}
Monomials of the form $a_\mu a_\sigma^*$, where $\mu, \sigma$ vary over all multi-indices, is a linear basis of $q_{ij}$-$CCR$.
\end{proposition}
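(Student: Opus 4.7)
The plan is to apply Bergman's diamond lemma directly to the defining relations of $q_{ij}$-$CCR$, viewed as a set of rewriting rules.

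First, I would orient each defining relation as a reduction that eliminates a starred generator immediately followed by an unstarred one:
\[
a_i^* a_j \;\rightsquigarrow\; q_{ij}\, a_j a_i^* \quad (i \neq j), \qquad a_i^* a_i \;\rightsquigarrow\; 1 + q_{ii}\, a_i a_i^*.
\]
Next, I would put a monoid partial order on the free monoid in $\{a_1,\dots,a_n,a_1^*,\dots,a_n^*\}$: a word $w$ precedes $w'$ if either $|w| < |w'|$, or $|w| = |w'|$, the two words use the same multiset of letters, and $w$ has strictly fewer inversions, where an inversion is a pair of positions $p<q$ such that position $p$ holds a starred letter and position $q$ holds an unstarred letter. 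Since compositions and inversion counts are additive with respect to concatenation, and because ``same composition'' makes the cross-inversion contributions on either flank equal, this order is well-founded and compatible with left and right multiplication. Each reduction rule strictly lowers the left-hand side in this order: for $i\neq j$ both sides have the same composition but the right-hand side has one fewer inversion, while in the second rule the scalar summand is shorter and $a_i a_i^*$ has the same composition as $a_i^* a_i$ with one fewer inversion.

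The second key step is to verify that there are no critical ambiguities to resolve. Every left-hand side has the shape (starred letter)(unstarred letter) and has length two, so no left-hand side is a proper subword of another and all inclusion ambiguities are vacuous. For an overlap, one would need nonempty $A,B,C$ with $a_i^* a_j = AB$ and $a_k^* a_l = BC$; length considerations force $|A|=|B|=|C|=1$, so $B$ would simultaneously be an unstarred letter (the suffix of the first word) and a starred letter (the prefix of the second), which is impossible. Hence no critical pair arises.

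By the diamond lemma, the set of reduction-irreducible monomials is therefore a linear basis of $q_{ij}$-$CCR$. A monomial is irreducible precisely when it contains no subword of the form $a_i^* a_j$, i.e.\ every unstarred letter precedes every starred letter, which is exactly the normal form $a_\mu a_\sigma^*$ (including the empty multi-indices, giving $1$). The only mildly delicate point, and what I would treat as the main thing to check, is the compatibility of the partial order with two-sided multiplication; once one observes that equal-composition comparisons are the only ones ever needed, the proposition really is as immediate as claimed.
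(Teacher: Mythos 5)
Your argument is correct and follows exactly the route the paper takes: the paper simply asserts that the result "follows obviously from the Diamond lemma," and your proposal supplies the details of that very argument (orientation of the relations, a compatible well-founded order via length and inversion count, and the absence of overlap or inclusion ambiguities among the length-two left-hand sides). Nothing in your write-up diverges from or goes beyond what the paper intends.
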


The following technical statement can be easily get (see \cite{Black})

\begin{proposition}
Let $\gamma$ be an action of a compact group on a $C^*$-algebra $\mathcal{B}$ and $\mathcal{B}_0$ be a dense $*$-subalgebra of $\mathcal{B}$.
If $E_\gamma$ is a conditional expectation onto the corresponding fixed point subalgebra $\mathcal{B}^\gamma$ of $\mathcal{B}$ and $E_\gamma(\mathcal{B}_0) \subset \mathcal{B}_0$, then $\mathcal{B}_0 \cap \mathcal{B}^\gamma$ is dense in $\mathcal{B}^\gamma$.
\end{proposition}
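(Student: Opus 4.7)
The plan is to start from an arbitrary element $b \in \mathcal{B}^\gamma$ and produce a norm-approximating sequence inside $\mathcal{B}_0 \cap \mathcal{B}^\gamma$ by applying the conditional expectation $E_\gamma$ to an approximating sequence in $\mathcal{B}_0$. Concretely, I would first use density of $\mathcal{B}_0$ to pick $b_k \in \mathcal{B}_0$ with $b_k \to b$ in norm, then consider the sequence $\{E_\gamma(b_k)\}$.

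The three facts that make the plan go through are standard. First, $E_\gamma$ is contractive: this follows immediately from the integral formula $E_\gamma(a) = \int_G \gamma_g(a)\,d\lambda$ together with $\|\gamma_g(a)\| = \|a\|$ for the $*$-automorphisms $\gamma_g$, so $\|E_\gamma(b_k) - E_\gamma(b)\| \le \|b_k - b\| \to 0$. Second, $E_\gamma$ restricted to $\mathcal{B}^\gamma$ is the identity, since $\gamma_g(b) = b$ for all $g$ gives $E_\gamma(b) = b$; hence $E_\gamma(b_k) \to b$. Third, the image of $E_\gamma$ lies in $\mathcal{B}^\gamma$ by definition, while the hypothesis $E_\gamma(\mathcal{B}_0) \subset \mathcal{B}_0$ puts each $E_\gamma(b_k)$ into $\mathcal{B}_0$ as well. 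Combining the last two, $E_\gamma(b_k) \in \mathcal{B}_0 \cap \mathcal{B}^\gamma$ for every $k$, and this sequence converges to $b$, which is what is needed.

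I do not anticipate any genuine obstacle. The entire content is the interplay between the density of $\mathcal{B}_0$, the continuity of $E_\gamma$, and the two stability properties (the hypothesis $E_\gamma(\mathcal{B}_0)\subset\mathcal{B}_0$ on one side and $E_\gamma(\mathcal{B}) \subset \mathcal{B}^\gamma$ on the other). The only minor point worth flagging in the write-up is a brief citation (or one-line justification from the Haar integral formula) for $\|E_\gamma\| \leq 1$ and for $E_\gamma\!\restriction_{\mathcal{B}^\gamma} = \mathrm{id}$, both of which are standard features of the canonical conditional expectation associated with a compact group action.
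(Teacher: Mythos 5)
Your argument is correct and is exactly the standard one: approximate $b\in\mathcal{B}^\gamma$ by elements of $\mathcal{B}_0$, push the approximants through the contractive idempotent $E_\gamma$, and use the hypothesis $E_\gamma(\mathcal{B}_0)\subset\mathcal{B}_0$ together with $E_\gamma(\mathcal{B})\subset\mathcal{B}^\gamma$ and $E_\gamma\restriction_{\mathcal{B}^\gamma}=\mathrm{id}$. The paper omits the proof entirely and merely cites a reference, and your write-up supplies precisely the expected argument with no gaps.
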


Put $occ_i(\mu)$ to be the number of occurrences of $i$ in a multi-index $\mu$. For a multi-index $\mu$ let $occ(\mu) = (occ_1(\mu), \ldots, occ_n(\mu)) \in \mathbb{Z}_{+}^n$. Write $occ(\mu) = occ(\sigma)$ if $occ_i(\mu) = occ_i(\sigma)$ for any $i = 1, \ldots, n$.

Consider the linear space $GICAR_{q_{ij}} = span\{a_\mu a_\sigma^* \text{ } | \text{ } occ(\mu) = occ(\sigma) \}.$

\begin{theorem}
If $x \in q_{ij}$-$CCR$ then $\Psi_{\mathbf{w}}(x) = x$ for any $\mathbf{w} \in \mathbb{T}^n$ if and only if $x \in GICAR_{q_{ij}}$.
\end{theorem}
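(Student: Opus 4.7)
The plan is to reduce the statement to a computation on the linear basis given by Proposition 2. By that proposition, any $x \in q_{ij}\text{-}CCR$ has a \emph{unique} expression
\[ x = \sum_{\mu, \sigma} c_{\mu, \sigma} \, a_\mu a_\sigma^*, \]
where the sum runs over finitely many pairs of multi-indices. So I would first translate the action $\Psi$ to this basis.

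Next, I would compute $\Psi_{\mathbf{w}}$ on a basis monomial. Since $\Psi_{\mathbf{w}}$ is an automorphism with $\Psi_{\mathbf{w}}(a_i) = w_i a_i$ and thus $\Psi_{\mathbf{w}}(a_i^*) = \overline{w_i} a_i^*$, a straightforward induction on the length of $\mu$ and $\sigma$ gives
\[ \Psi_{\mathbf{w}}(a_\mu a_\sigma^*) = \mathbf{w}^{occ(\mu) - occ(\sigma)} \, a_\mu a_\sigma^*, \qquad \mathbf{w}^{\alpha} := w_1^{\alpha_1} \cdots w_n^{\alpha_n}, \]
for every multi-indices $\mu, \sigma$. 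Hence
\[ \Psi_{\mathbf{w}}(x) = \sum_{\mu, \sigma} c_{\mu, \sigma} \, \mathbf{w}^{occ(\mu) - occ(\sigma)} \, a_\mu a_\sigma^*. \]

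The direction ``$x \in GICAR_{q_{ij}} \Rightarrow \Psi_{\mathbf{w}}(x) = x$'' is then immediate: each basis term with $occ(\mu) = occ(\sigma)$ contributes a factor $\mathbf{w}^0 = 1$. For the converse, assume $\Psi_{\mathbf{w}}(x) = x$ for every $\mathbf{w} \in \mathbb{T}^n$. Uniqueness of the basis expansion forces
\[ c_{\mu, \sigma}\bigl(\mathbf{w}^{occ(\mu) - occ(\sigma)} - 1\bigr) = 0 \quad \text{for every } \mathbf{w} \in \mathbb{T}^n. \]
Whenever $occ(\mu) \neq occ(\sigma)$ there exists some $\mathbf{w} \in \mathbb{T}^n$ for which $\mathbf{w}^{occ(\mu) - occ(\sigma)} \neq 1$, so $c_{\mu, \sigma} = 0$, which means exactly $x \in GICAR_{q_{ij}}$.

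The only mildly nontrivial step is verifying the formula $\Psi_{\mathbf{w}}(a_\mu a_\sigma^*) = \mathbf{w}^{occ(\mu) - occ(\sigma)} a_\mu a_\sigma^*$; this is purely formal because $\Psi_{\mathbf{w}}$ is a $*$-homomorphism, so no obstacle is expected. The essential content of the proof is really the linear independence statement of Proposition 2 combined with the fact that distinct characters of $\mathbb{T}^n$ separate points.
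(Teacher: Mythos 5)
Your proposal is correct and follows essentially the same route as the paper: compute $\Psi_{\mathbf{w}}$ on the basis monomials $a_\mu a_\sigma^*$ of Proposition 2, observe the character $\mathbf{w} \mapsto \mathbf{w}^{occ(\mu)-occ(\sigma)}$ appearing as the eigenvalue, and use linear independence of the basis to force $occ(\mu) = occ(\sigma)$ on every term with nonzero coefficient. The only cosmetic difference is that the paper tests against the one-parameter subgroups $\mathbf{w}_i = (1,\ldots,1,z,1,\ldots,1)$ coordinate by coordinate, while you invoke the general fact that distinct characters of $\mathbb{T}^n$ separate points; these are interchangeable.
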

\begin{proof}
Compute an action of $\Psi_{\mathbf{w}}$, $\mathbf{w} = (w_1, \ldots, w_n)$ on the basis of $q_{ij}$-$CCR$:
\begin{equation*}
\begin{split}
\Psi_{\mathbf{w}}(a_\mu a_\sigma^*) & =  w_{\mu_1} \ldots w_{\mu_k} \overline{w_{\sigma_1}} \ldots \overline{w_{\sigma_k}} a_\mu a_\sigma^* = \\ & = w_1^{occ_1(\mu)} \overline{w_1}^{occ_1(\sigma)} \ldots w_{n}^{occ_n(\mu)} \overline{w_{n}}^{occ_n(\sigma)} a_\mu a_\sigma^*.
\end{split}
\end{equation*}

If $a_\mu a_\sigma^* \in GICAR_{q_{ij}}$ then for any $i = 1, \ldots, n$ we have $w_i^{occ_i(\mu)} \overline{w_i}^{occ_i(\sigma)} = 1$, so $\Psi_{\mathbf{w}}(a_\mu a_\sigma^*) = a_\mu a_\sigma^*$. Thus if $x \in GICAR_{q_{ij}}$ then $\Psi_{\mathbf{w}}(x) = x$.

Conversely, if $x \in q_{ij}$-$CCR$ then we can write $x = \sum_{\mu, \sigma} C_{\mu, \sigma} a_\mu a_\sigma^*$. Suppose $\Psi_{\mathbf{w}}(x) = x$ for any $\mathbf{w} \in \mathbb{T}^{n}$. In particular it is true for $\mathbf{w}_i = (1, \ldots, 1, z, 1, \ldots, 1)$, where $z$ is on the $i$-th place. Then 
\[ \Psi_{\mathbf{w}_i}(x) - x = \sum_{\mu, \sigma} (z^{occ_i(\mu)} \overline{z}^{occ_i(\sigma)} - 1) C_{\mu, \sigma} a_\mu a_\sigma^* = 0 \text{ for any } z \in \mathbb{T}.\]
Since $a_\mu a_\sigma^*$ are linearly independent, the above equality implies $occ(\mu) = occ(\sigma)$ whenever $C_{\mu, \sigma} \neq 0$.

\end{proof}

Theorem 2 easily implies that $GICAR_{q_{ij}}$ is a $*$-subalgebra of $q_{ij}$-$\mathcal{CCR}$. Combining Proposition 4 with Theorem 2 we obtain the following corollary.

\begin{corollary}
Let $\mathcal{GICAR}_{q_{ij}} = \overline{GICAR}_{q_{ij}}$. Then the fixed point subalgebra $(q_{ij}$-$\mathcal{CCR})^{\Psi}$ coincides with $\mathcal{GICAR}_{q_{ij}}$.
\end{corollary}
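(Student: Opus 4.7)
The plan is to straightforwardly combine Theorem 2 (which identifies the $\Psi$-fixed elements inside the dense $*$-subalgebra $q_{ij}$-$CCR$) with Proposition 4 (which promotes density of a stable $*$-subalgebra to density of its fixed-point part).

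First, I would take $\mathcal{B} = q_{ij}$-$\mathcal{CCR}$ with its action $\Psi$ of $G = \mathbb{T}^n$, and set $\mathcal{B}_0 = q_{ij}$-$CCR$, which is a dense $*$-subalgebra by the universal property of the enveloping $C^*$-algebra. The associated conditional expectation $E_\Psi : \mathcal{B} \to \mathcal{B}^\Psi$ is given by integration against the Haar measure on $\mathbb{T}^n$.

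The key verification is that $E_\Psi(\mathcal{B}_0) \subset \mathcal{B}_0$. For this, I would use the basis $\{a_\mu a_\sigma^*\}$ of $q_{ij}$-$CCR$ from Proposition 3 together with the computation performed in the proof of Theorem 2: for each basis monomial,
\[
E_\Psi(a_\mu a_\sigma^*) = \int_{\mathbb{T}^n} w_1^{occ_1(\mu)-occ_1(\sigma)} \cdots w_n^{occ_n(\mu)-occ_n(\sigma)}\, d\lambda(\mathbf{w})\, a_\mu a_\sigma^*,
\]
which equals $a_\mu a_\sigma^*$ when $occ(\mu)=occ(\sigma)$ and $0$ otherwise. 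Hence $E_\Psi$ maps each basis element into $\mathcal{B}_0$ (in fact into $GICAR_{q_{ij}}$), and by linearity it preserves $\mathcal{B}_0$.

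With this stability in hand, Proposition 4 gives that $\mathcal{B}_0 \cap \mathcal{B}^\Psi$ is dense in $\mathcal{B}^\Psi$. But Theorem 2 identifies $\mathcal{B}_0 \cap \mathcal{B}^\Psi$ with $GICAR_{q_{ij}}$, so taking closures yields
\[
(q_{ij}\text{-}\mathcal{CCR})^\Psi = \overline{GICAR_{q_{ij}}} = \mathcal{GICAR}_{q_{ij}}.
\]
There is no real obstacle here; the only thing to be careful about is confirming that the formula computed in the proof of Theorem 2 indeed describes $E_\Psi$ monomial-by-monomial, which is immediate from continuity of $\Psi$ and Fubini applied on the Haar measure.
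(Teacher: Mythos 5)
Your proposal is correct and follows exactly the route the paper intends: the paper itself states only ``Combining Proposition 4 with Theorem 2 we obtain the following corollary,'' and your argument supplies precisely the missing verification that $E_\Psi$ preserves the dense $*$-subalgebra by acting diagonally on the monomial basis $a_\mu a_\sigma^*$. No further comment is needed.
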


Below we study in a more details the structure of $\mathcal{GICAR}_{q_{ij}}$ in the case when $q_{ii} = 0$, $i = 1,\ldots,n$. In the following lemma we present the multiplication rules for elements of $\mathcal{GICAR}_{q_{ij}}$.

\begin{lemma}
Let $a_{\mu(1)} a_{\sigma(1)}^*, a_{\mu(2)} a_{\sigma(2)}^* \in \mathcal{GICAR}_{q_{ij}}$. Then $a_{\mu(1)} a_{\sigma(1)}^* a_{\mu(2)} a_{\sigma(2)}^* = \alpha a_{\mu(3)} a_{\sigma(3)}^*$ for some $\alpha \in \mathbb{C}$. Moreover, \[occ_i(\mu(3)) = \max(occ_i(\mu(1)), occ_i(\mu(2)))\] for $i = 1, \ldots, n$.
\end{lemma}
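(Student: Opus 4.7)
My plan is to isolate the hard part of the calculation in a sub-claim about pushing a starred monomial past an unstarred one, and then to deduce the lemma by sandwiching this identity between $a_{\mu(1)}$ and $a_{\sigma(2)}^*$. The hypothesis $q_{ii}=0$ is crucial: the defining relations become exactly $a_i^* a_i = 1$ and $a_i^* a_j = q_{ij} a_j a_i^*$ for $i\neq j$, so every rewriting step replaces a single monomial by a scalar multiple of a single monomial. Hence the normal form of $a_\sigma^* a_\mu$ is of the shape $\beta\,a_\nu a_\tau^*$ for a single scalar $\beta$ and a single pair of multi-indices $(\nu,\tau)$, rather than a linear combination; this is the feature that ultimately makes $\mathcal{GICAR}_{q_{ij}}$ behave like a matrix algebra.

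The sub-claim I would establish is: for any multi-indices $\sigma,\mu$ there exist $\beta\in\mathbb{C}$ and multi-indices $\nu,\tau$ with
\[a_\sigma^* a_\mu = \beta\, a_\nu a_\tau^*, \qquad occ_i(\nu) = \max(0,\, occ_i(\mu)-occ_i(\sigma)), \qquad occ_i(\tau) = \max(0,\, occ_i(\sigma)-occ_i(\mu))\]
for every $i$. I would prove this by induction on $|\sigma|$. The base case $|\sigma|=0$ is immediate with $\nu=\mu$ and $\tau$ empty. For the inductive step write $a_\sigma^* = a_{\sigma'}^* a_i^*$ and slide $a_i^*$ through $a_\mu$ one letter at a time: either $i$ does not occur in $\mu$, in which case $a_i^*$ passes all the way through, accumulating a product of $q$'s, and ends up to the right of $a_\mu$; or $i$ does occur, in which case we commute $a_i^*$ up to the first occurrence of $a_i$ in $a_\mu$ and collapse the pair via $a_i^*a_i=1$, shortening $\mu$ by one letter. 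In either case the problem reduces to the induction hypothesis applied to the shorter $\sigma'$ together with a slightly modified $\mu$, and the occupation bookkeeping is a short case check that treats $j=i$ and $j\neq i$ separately.

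With the sub-claim in hand, I apply it to $a_{\sigma(1)}^* a_{\mu(2)} = \alpha\,a_\nu a_\tau^*$ and substitute back, using $a_\tau^* a_{\sigma(2)}^* = a_{\sigma(2)\circ \tau}^*$ (concatenation), to obtain
\[a_{\mu(1)} a_{\sigma(1)}^* a_{\mu(2)} a_{\sigma(2)}^* = \alpha\, a_{\mu(1)\circ \nu}\, a_{\sigma(2)\circ \tau}^*,\]
so $\mu(3) = \mu(1)\circ \nu$ and $\sigma(3) = \sigma(2)\circ \tau$. Invoking $occ_i(\mu(1))=occ_i(\sigma(1))$, which holds because $a_{\mu(1)} a_{\sigma(1)}^*\in\mathcal{GICAR}_{q_{ij}}$, gives
\[occ_i(\mu(3)) = occ_i(\mu(1)) + \max(0,\, occ_i(\mu(2))-occ_i(\mu(1))) = \max(occ_i(\mu(1)),\, occ_i(\mu(2))),\]
which is the desired identity. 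The main obstacle is purely administrative: one must verify that no branching sneaks into the normal-ordering procedure (which is exactly where $q_{ii}=0$ is used, since otherwise $a_i^*a_i = 1 + q_{ii}a_i a_i^*$ would split into two terms) and that the occupation formulas propagate correctly through both cases of the induction. Proposition~3 guarantees that the resulting normal form is independent of the order of rewrites, so I need not worry about confluence.
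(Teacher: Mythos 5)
Your proposal is correct and follows essentially the same route as the paper: normal-order the middle factor $a_{\sigma(1)}^* a_{\mu(2)}$ by sliding each starred letter through, annihilating it against the first matching unstarred letter via $a_i^*a_i=1$ or passing it all the way through (with $q_{ii}=0$ ensuring no branching), and then track occupation numbers using $occ(\mu(1))=occ(\sigma(1))$. Your version merely packages the paper's informal rewriting argument as an explicit inductive sub-claim with the formulas $occ_i(\nu)=\max(0,occ_i(\mu)-occ_i(\sigma))$ and $occ_i(\tau)=\max(0,occ_i(\sigma)-occ_i(\mu))$, which is a tidier but not substantively different argument.
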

\begin{proof}
Rewrite the product $a_{\mu(1)} a_{\sigma(1)}^*$ and $a_{\mu(2)} a_{\sigma(2)}^*$ as an element of $span \{a_\mu a_\sigma^* \}$:
\begin{enumerate}
    \item If $\sigma(1)_i \neq \mu(2)_k$ for any $k$, then move $a_{\sigma(1)_i}^*$ to the left of $a_{\mu(2)}$ using $q_{ij}$-commutation relations.
    \item If $\sigma(1)_i = \mu(2)_k$ for some $k$, then move $a_{\sigma(1)_i}^*$ left to $a_{\mu(2)_k}$ using $q_{ij}$-commutation relations and annihilate them using the fact that $a_{\sigma(1)_i}$ is an isometry.
\end{enumerate}

As a result we get an expression of the form $\alpha a_{\mu(3)} a_{\sigma(3)}^*$ for some $\alpha \in \mathbb{C}$.

If $occ_i(\sigma(1)) > occ_i(\mu(2))$, then each occurrence of $a_i$ in $a_{\mu(2)}$ annihilates with the corresponding element of $a_{\sigma(1)}^*$. Hence, $occ_i(\mu(3)) = occ_i(\mu(1))$ and $occ_i(\sigma(3)) = occ_i(\sigma(2)) + (occ_i(\sigma(1)) - occ_i(\mu(2)))$. But $occ_i(\mu(1)) = occ_i(\sigma(1))$ and $occ_i(\mu(2)) = occ_i(\sigma(2))$. Hence $occ_i(\sigma(3)) = occ_i(\mu(3)) = occ_i(\sigma(1))$.
     
If $occ_i(\sigma(1)) \leq occ_i(\mu(2))$, then the same arguments as in the first case lead us to $occ_i(\sigma(3)) = occ_i(\mu(3)) = occ_i(\mu(2))$.
  
Therefore $ occ_i(\mu(3)) = \max(occ_i(\mu(1)), occ_i(\mu(2))). $
\end{proof}

Put 
\begin{equation}
    \mathcal{W}_k = span\{a_\mu a_\sigma^* \in \mathcal{GICAR}_{q_{ij}} \text{ } | \text{ } \max_i occ_i(\mu) \leq k \}. 
\end{equation}
Lemma 2 implies the following statement.
\begin{corollary}
$\mathcal{W}_k$ is a finite-dimensional $*$-subalgebra of $\mathcal{GICAR}_{q_{ij}}$.
\end{corollary}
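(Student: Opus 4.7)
The plan is to verify directly the three ingredients implicit in the claim: closure under multiplication, closure under the adjoint, and finite dimensionality. All three will follow almost immediately from the definition of $\mathcal{W}_k$ together with Lemma 2, so I do not anticipate any substantial obstacle; the statement is essentially a bookkeeping consequence of the multiplication rule just established.

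For closure under multiplication, I would take two basis monomials $a_{\mu(1)} a_{\sigma(1)}^*$ and $a_{\mu(2)} a_{\sigma(2)}^*$ lying in $\mathcal{W}_k$ and invoke Lemma 2 to write their product as $\alpha\, a_{\mu(3)} a_{\sigma(3)}^*$ with $occ_i(\mu(3)) = \max(occ_i(\mu(1)), occ_i(\mu(2)))$ for every $i$. Since $\max_i occ_i(\mu(j)) \leq k$ for $j=1,2$, the same bound passes to $\mu(3)$; and the product lies in $\mathcal{GICAR}_{q_{ij}}$ (a $*$-subalgebra by Theorem 2), so in particular $occ(\mu(3)) = occ(\sigma(3))$, making $\alpha\, a_{\mu(3)} a_{\sigma(3)}^* \in \mathcal{W}_k$. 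Extending bilinearly yields $\mathcal{W}_k \cdot \mathcal{W}_k \subset \mathcal{W}_k$.

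For the $*$-structure, I would simply observe that $(a_\mu a_\sigma^*)^* = a_\sigma a_\mu^*$, and that within $\mathcal{GICAR}_{q_{ij}}$ the condition $occ(\mu) = occ(\sigma)$ is symmetric in $\mu$ and $\sigma$. Hence $\max_i occ_i(\sigma) = \max_i occ_i(\mu) \leq k$, and the adjoint stays in $\mathcal{W}_k$.

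For finite dimensionality, I would bound the cardinality of the spanning set. Any multi-index $\mu$ with $\max_i occ_i(\mu) \leq k$ has length $|\mu| = \sum_i occ_i(\mu) \leq nk$, and for each admissible length $l \leq nk$ there are at most $n^l$ such multi-indices; the same bound applies to $\sigma$. Thus the set $\{ a_\mu a_\sigma^* : occ(\mu) = occ(\sigma),\ \max_i occ_i(\mu) \leq k \}$ is finite, and so $\dim \mathcal{W}_k < \infty$. Combined with the first two steps this completes the proof.
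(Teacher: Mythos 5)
Your proof is correct and follows the same route the paper intends: the paper gives no written proof, merely noting that the corollary follows from Lemma 2, and your three checks (closure under multiplication via the $\max$ formula of Lemma 2, closure under the adjoint by symmetry of the condition $occ(\mu)=occ(\sigma)$, and finiteness of the spanning set) are exactly the bookkeeping being left to the reader.
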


\begin{theorem}
    $\mathcal{GICAR}_{q_{ij}}$ is an AF algebra.
\end{theorem}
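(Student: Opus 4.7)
The plan is to assemble the pieces already put in place: the AF property requires exhibiting an increasing sequence of finite-dimensional $*$-subalgebras whose union is dense. The family $\{\mathcal{W}_k\}_{k \geq 1}$ defined in equation (1) is the natural candidate.

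First I would observe that $\mathcal{W}_k \subseteq \mathcal{W}_{k+1}$ is immediate from the defining inequality $\max_i occ_i(\mu) \leq k$. Corollary 2 already tells us that each $\mathcal{W}_k$ is a finite-dimensional $*$-subalgebra of $\mathcal{GICAR}_{q_{ij}}$, so we get for free an inductive system
\[ \mathcal{W}_1 \subset \mathcal{W}_2 \subset \mathcal{W}_3 \subset \cdots \subset \mathcal{GICAR}_{q_{ij}}. \]

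Next I would show that $\bigcup_{k \geq 1} \mathcal{W}_k = GICAR_{q_{ij}}$. Every basis element $a_\mu a_\sigma^* \in GICAR_{q_{ij}}$ satisfies $\max_i occ_i(\mu) = N < \infty$ for some finite $N$, hence lies in $\mathcal{W}_N$. Since $GICAR_{q_{ij}}$ is by definition the linear span of such monomials (with $occ(\mu) = occ(\sigma)$), we conclude $GICAR_{q_{ij}} = \bigcup_k \mathcal{W}_k$.

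Finally, by definition $\mathcal{GICAR}_{q_{ij}} = \overline{GICAR_{q_{ij}}}$ (in the norm of $Isom_{q_{ij}}$), so
\[ \mathcal{GICAR}_{q_{ij}} = \overline{\bigcup_{k \geq 1} \mathcal{W}_k}, \]
which is precisely the statement that $\mathcal{GICAR}_{q_{ij}}$ is AF. There is essentially no obstacle left at this point: the real work — verifying that the $q_{ij}$-commutation and isometry relations do not increase the multi-index occupation numbers when multiplying elements of $\mathcal{W}_k$ (so that $\mathcal{W}_k$ is closed under multiplication) — was already carried out in Lemma 2 via the $\max$-formula, which is exactly what forces each $\mathcal{W}_k$ to be a subalgebra rather than merely a subspace.
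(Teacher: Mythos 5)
Your proposal is correct and follows essentially the same route as the paper: both exhibit $\mathcal{GICAR}_{q_{ij}}$ as the closure of the increasing union of the finite-dimensional $*$-subalgebras $\mathcal{W}_k$, relying on Lemma 2 and Corollary 2 for the fact that each $\mathcal{W}_k$ is a finite-dimensional $*$-subalgebra. Your write-up is merely more explicit about the nesting $\mathcal{W}_k \subset \mathcal{W}_{k+1}$ and the density step, which the paper leaves implicit.
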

\begin{proof}
Since any $x \in GICAR_{q_{ij}}$ belongs to $\mathcal{W}_k$ for sufficiently large $k$, we get
\[ GICAR_{q_{ij}} = \bigcup_k \mathcal{W}_k. \]
Hence $\mathcal{GICAR}_{q_{ij}}$ is an AF algebra.
\end{proof}

Every AF algebra is nuclear (see \cite{Black}). Therefore we obtain an important corollary of Theorem 3 and Proposition 1.

\begin{theorem}
$Isom_{q_{ij}}$ is nuclear for arbitrary $\{ q_{ij} \}$ such that $\max |q_{ij}| < 1$.
\end{theorem}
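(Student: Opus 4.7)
The plan is to chain together the structural results already established for the fixed-point subalgebra with the transfer principle from Proposition 1. All the hard work has been done; what remains is to verify that the machinery applies cleanly to $Isom_{q_{ij}}$.

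First, I would note that the torus action $\Psi$ constructed on $q_{ij}$-$\mathcal{CCR}$ restricts to an action on $Isom_{q_{ij}}$: each $\Psi_{\mathbf{w}}$ sends generators to scalar multiples of generators, and since the isometry relation $a_i^* a_i = 1$ is clearly preserved by $a_i \mapsto w_i a_i$ (as $\overline{w_i} w_i = 1$), the automorphism descends. By the same argument as in Theorem 2 and its corollary (working now in $Isom_{q_{ij}}$, whose linear basis $\{ a_\mu a_\sigma^* \}$ arises from the Diamond lemma in this setting too, given that $q_{ii} = 0$ makes the rewriting system confluent via the isometry rule $a_i^* a_i = 1$), the fixed-point subalgebra $Isom_{q_{ij}}^{\Psi}$ coincides with $\mathcal{GICAR}_{q_{ij}}$.

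Next, I invoke Theorem 3 to conclude that $\mathcal{GICAR}_{q_{ij}} = \overline{\bigcup_k \mathcal{W}_k}$ is an AF algebra. A standard result (see \cite{Black}) guarantees that every AF algebra is nuclear, so $\mathcal{GICAR}_{q_{ij}}$ is nuclear.

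Finally, I apply Proposition 1(1) to the action $\Psi$ of the compact group $\mathbb{T}^n$ on $Isom_{q_{ij}}$: since the fixed-point subalgebra is nuclear, so is $Isom_{q_{ij}}$ itself. There is no real obstacle in the argument; the only point requiring mild care is confirming that the identification of the fixed-point subalgebra with $\mathcal{GICAR}_{q_{ij}}$ carried out for $q_{ij}$-$\mathcal{CCR}$ transfers verbatim to $Isom_{q_{ij}}$, but this is immediate because the proofs of Theorem 2, Proposition 4, and Lemma 2 only used the $q_{ij}$-commutation relations between $a_i^*$ and $a_j$ and the relation $a_i^* a_i = 1$, both of which hold in $Isom_{q_{ij}}$.
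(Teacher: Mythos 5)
Your proposal is correct and follows exactly the route the paper takes: identify the fixed-point subalgebra of the $\mathbb{T}^n$-action $\Psi$ on $Isom_{q_{ij}}$ with $\mathcal{GICAR}_{q_{ij}}$, invoke Theorem 3 that this is AF (hence nuclear), and transfer nuclearity back via Proposition 1(1). The paper states this only as a one-line corollary of Theorem 3 and Proposition 1, so your added verification that the action and the fixed-point identification restrict cleanly to $Isom_{q_{ij}}$ is a welcome filling-in of detail rather than a deviation.
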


\section{Stability of $\mathcal{GICAR}_{q_{ij}}$}

In this section we prove that the structure of $\mathcal{GICAR}_{q_{ij}}$ does not depend on $\{q_{ij} \}$, i.e. $\mathcal{GICAR}_{q_{ij}} \simeq \mathcal{GICAR}_{0}$ for any values of $q_{ij}$, $\max |q_{ij}| < 1$. For this purpose we compute the Bratelli diagram of $\mathcal{GICAR}_{q_{ij}}$.

Denote by $\mathbb{Z}_{+}^n$ a set of all $n$-tuples of non-negative integers. For $\mathbf{v} = (v_1, \ldots, v_n),$  $\mathbf{u} = (u_1, \ldots, u_n) \in \mathbb{Z}_{+}^n$ put $\mathbf{v} + \mathbf{u}$, $\mathbf{v} - \mathbf{u}$, $\max(\mathbf{v}, \mathbf{u})$ for an $n$-tuple which is obtained by componentwise application of the corresponding functions to the entries of $\mathbf{v}$ and $\mathbf{u}$. We write $\mathbf{v} \leq \mathbf{u}$ if $v_i \leq u_i$ and $\mathbf{v} = \mathbf{u}$ if $v_i = u_i$ for all $i = 1, \ldots, n$. For $S \subset \{1,\ldots,n\}$ denote by $\delta_S$ $n$-tuple which has 1 in the $i$-th entry when $i \in S$ and 0 otherwise. For $k \geq 0$ write $\mathbf{k}^n$ for the $n$-tuple $(k,\ldots,k)$. 

For $\mathbf{v} \in \mathbb{Z}_{+}^n$ we put \[ \mathcal{W}_\mathbf{v} = span\{ a_\mu a_\sigma^* \text{ } | \text{ } occ(\mu) = occ(\sigma) = \mathbf{v}\}. \]

From Lemma 2 one can easily get that
\begin{equation}
    \mathcal{W}_{\mathbf{v}} \cdot \mathcal{W}_{\mathbf{u}} \subset \mathcal{W}_{\max(\mathbf{v}, \mathbf{u})}.
\end{equation}

As a consequence of this inclusion, we have that $\mathcal{W}_{\mathbf{v}}$ is closed under multiplication, so it is a $*$-subalgebra of $\mathcal{GICAR}_{q_{ij}}$. In the following two lemmas we construct some faithful $*$-representation of $\mathcal{W}_{\mathbf{v}}$.

\begin{lemma}
For $\mathbf{v} \in \mathbb{Z}_{+}^n$ put
\[ \mathcal{H}_{\mathbf{v}} = span \{ a_\alpha \text{ } | \text{ } occ(\alpha) = \mathbf{v} \}. \]
Then for any $a_{\alpha}, a_{\beta} \in \mathcal{H}_{\mathbf{v}}$ one has $a_{\beta}^* a_{\alpha} = \lambda_{\alpha, \beta} 1$ for some $\lambda_{\alpha, \beta} \in \mathbb{C}$.
Equip $\mathcal{H}_{\mathbf{v}}$ with the Hermitian form $\langle a_\mu, a_\sigma \rangle_{\mathbf{v}} = \lambda_{\mu, \sigma}$.
Then $\langle \cdot, \cdot \rangle_{\mathbf{v}}$ is positive-definite.

\end{lemma}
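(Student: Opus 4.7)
The plan is to argue in three stages.

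First, to show $a_\beta^* a_\alpha \in \mathbb{C}\cdot 1$, I would proceed by induction on the common length $k = |\mathbf{v}| = \sum_i v_i$ of $\alpha$ and $\beta$. The base case $k = 0$ gives $a_\emptyset^* a_\emptyset = 1$. For the inductive step I rewrite
\[ a_\beta^* a_\alpha = a_{\beta_k}^* \cdots a_{\beta_2}^* \bigl(a_{\beta_1}^* a_{\alpha_1} \cdots a_{\alpha_k}\bigr) \]
and push the innermost factor $a_{\beta_1}^*$ to the right. The only available relations in $Isom_{q_{ij}}$ are $a_i^* a_j = q_{ij} a_j a_i^*$ for $i \ne j$ and $a_i^* a_i = 1$, so $a_{\beta_1}^*$ must commute past any mismatched $a_{\alpha_i}$ at the cost of the scalar factor $q_{\beta_1, \alpha_i}$, and collapse to $1$ the moment it meets a matching $a_{\alpha_r} = a_{\beta_1}$. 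Since $occ(\alpha) = occ(\beta)$ forces the letter $\beta_1$ to appear in $\alpha$, such a first matching index $r$ exists, producing
\[ a_{\beta_1}^* a_{\alpha_1} \cdots a_{\alpha_k} = \Bigl(\prod_{i=1}^{r-1} q_{\beta_1, \alpha_i}\Bigr)\, a_{\alpha_1} \cdots a_{\alpha_{r-1}} a_{\alpha_{r+1}} \cdots a_{\alpha_k}. \]
Substituting this back presents $a_\beta^* a_\alpha$ as a scalar multiple of $a_{\beta'}^* a_{\alpha'}$, where $\beta' = (\beta_2, \ldots, \beta_k)$ and $\alpha' = (\alpha_1, \ldots, \widehat{\alpha_r}, \ldots, \alpha_k)$ have common occurrence vector $\mathbf{v} - \delta_{\{\beta_1\}}$, and the inductive hypothesis finishes the claim.

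Second, the Hermitian symmetry of $\langle \cdot, \cdot\rangle_{\mathbf{v}}$ is immediate by taking adjoints of $a_\beta^* a_\alpha = \lambda_{\alpha, \beta}\cdot 1$, which yields $a_\alpha^* a_\beta = \overline{\lambda_{\alpha, \beta}}\cdot 1$, i.e. $\lambda_{\beta, \alpha} = \overline{\lambda_{\alpha, \beta}}$.

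Third, I extend $\langle \cdot, \cdot\rangle_{\mathbf{v}}$ sesquilinearly to $\mathcal{H}_{\mathbf{v}}$ (linear in the first argument). For any $x = \sum_\alpha c_\alpha a_\alpha \in \mathcal{H}_{\mathbf{v}}$ the first claim gives
\[ x^* x \;=\; \sum_{\alpha, \beta} \overline{c_\alpha} c_\beta \, a_\alpha^* a_\beta \;=\; \Bigl(\sum_{\alpha, \beta} \overline{c_\alpha} c_\beta \, \lambda_{\beta, \alpha}\Bigr)\cdot 1 \;=\; \langle x, x\rangle_{\mathbf{v}}\cdot 1. \]
Since $x^* x$ is a positive element of the unital $C^*$-algebra $Isom_{q_{ij}}$ and a scalar multiple of the unit is positive precisely when the scalar is non-negative, we obtain $\langle x, x\rangle_{\mathbf{v}} \geq 0$. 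If moreover $\langle x, x\rangle_{\mathbf{v}} = 0$, then $x^* x = 0$ and the $C^*$-identity $\|x\|^2 = \|x^* x\|$ forces $x = 0$ in $Isom_{q_{ij}}$, giving the strict positivity.

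The main delicacy is the inductive step: one must use decisively that in $Isom_{q_{ij}}$ there is no $a_i^* a_i = 1 + q_{ii} a_i a_i^*$-type relation competing with $a_i^* a_i = 1$, so $a_{\beta_1}^*$ is genuinely forced to annihilate the first matching $a_{\beta_1}$ on the right, producing an unambiguous scalar. Once the scalar identity in step one is secured, the positive-definiteness is a short $C^*$-algebraic verification that requires no appeal to the Fock representation.
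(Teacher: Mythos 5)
Your first step (the scalar identity $a_\beta^* a_\alpha = \lambda_{\alpha,\beta}\cdot 1$) is correct and is in substance the same computation the paper performs: pushing $a_{\beta_1}^*$ to the right kills the first matching letter and picks up the factor $\prod_{i<r} q_{\beta_1\alpha_i}$, which is exactly the unique surviving term of the Bozejko--Speicher inner product when $q_{ii}=0$. Where you genuinely diverge is the positivity. The paper identifies $\langle a_\mu, a_\sigma\rangle_{\mathbf v}$ with the restriction of the Fock inner product to the tensors $\xi_{\alpha_1}\otimes\cdots\otimes\xi_{\alpha_m}$ with $occ(\alpha)=\mathbf v$ and then quotes the theorem that this form is strictly positive definite for $\max|q_{ij}|<1$; you instead read off $x^*x=\langle x,x\rangle_{\mathbf v}\cdot 1$ and invoke positivity of $x^*x$ in the $C^*$-algebra. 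That is an attractive shortcut, but it proves slightly less than what the lemma is used for.

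Two caveats. First, to conclude that $\lambda\cdot 1\ge 0$ implies $\lambda\ge 0$ you need $1\ne 0$ in the universal $C^*$-algebra $Isom_{q_{ij}}$, i.e.\ the existence of at least one nonzero representation of the relations; for general $q_{ij}$ the only such representation on offer is the Fock one, whose construction rests on precisely the positivity you are trying to bypass, so the claimed independence from the Fock representation is partly illusory. Second, and more substantively, your definiteness argument shows only that $\langle x,x\rangle_{\mathbf v}=0$ forces $x=0$ \emph{as an element of the algebra}. What Lemma~4 actually needs is that the Gram matrix $(\lambda_{\alpha,\beta})$ is nonsingular, equivalently that the monomials $a_\alpha$ with $occ(\alpha)=\mathbf v$ are linearly independent in $Isom_{q_{ij}}$, so that $\dim\mathcal H_{\mathbf v}=\frac{(v_1+\cdots+v_n)!}{v_1!\cdots v_n!}$. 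Linear independence holds in the $*$-algebra by the Diamond lemma (Proposition~3), but transporting it to the enveloping $C^*$-algebra requires a separating family of representations --- again, in practice, the Fock representation. The paper's route delivers both statements at once, since the tensors $\xi_{\alpha_1}\otimes\cdots\otimes\xi_{\alpha_m}$ are independent by construction and the Fock form is strictly positive definite on their span. To keep your argument, you should supplement it with an explicit appeal to the faithfulness of $\pi_F$ on the $*$-algebra (\cite{Proskur_faithfulness}), or simply retain the identification with the Fock inner product.
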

\begin{proof}
To prove this lemma we show that $\langle a_\mu, a_\sigma \rangle_{\mathbf{v}}$ coincides with the restriction of the Fock inner product to the subspace spanned by the tensors of the following type
\[ \{ \xi_{\alpha} = \xi_{\alpha_1} \otimes \ldots \otimes \xi_{\alpha_m} \text{ } | \text{ } \alpha = (\alpha_1, \ldots, \alpha_m), \text{ } occ(\alpha) = \mathbf{v} \}, \]
where $m = v_1 + \ldots + v_n$. Here we identify naturally $a_\alpha$ with $\xi_{\alpha_1} \otimes \ldots \otimes \xi_{\alpha_m}$.
We prove the statement by induction on $v_1 + \ldots + v_n$. Then we use the fact that the Fock inner product is positive definite, see \cite{Bozejko_brownian_motion}.

If $\mathbf{v} = 0$ then \[ \langle 1, 1 \rangle_{\mathbf{0}} = 1 = \langle \Omega, \Omega \rangle_{Fock}. \]

Assume $v_1 + \ldots + v_n = m + 1$ for some $m \geq 0$. Take $a_\mu, a_\sigma \in \mathcal{H}_{\mathbf{v}}$. Let $t_0$ be index of the first occurrence of $\sigma_1$ in $\mu$. Since $q_{ii} = 0$ for $i = 1, \ldots, n$, we get
\begin{gather*}
    \langle \xi_{\mu_1} \otimes \ldots \otimes \xi_{\mu_{m+1}}, \xi_{\sigma_1} \otimes \ldots \otimes \xi_{\sigma_{m+1}} \rangle_{Fock} = \\ = \sum_{t = 1}^{m+1} \delta_{\sigma_1 \mu_t} q_{\sigma_1 \mu_1} \ldots q_{\sigma_1 \mu_{t - 1}} \langle \xi_{\mu_1} \otimes \ldots \otimes \widehat{\xi_{\mu_t}} \otimes \ldots \otimes \xi_{\mu_{m+1}}, \xi_{\sigma_2} \otimes \ldots \otimes \xi_{\sigma_{m+1}} \rangle_{Fock} = \\ =
    q_{\sigma_1 \mu_1} \ldots q_{\sigma_1, \mu_{t_0 - 1}} \langle \xi_{\mu_1} \otimes \ldots \otimes \widehat{\xi_{\mu_{t_0}}} \otimes \ldots \otimes \xi_{\mu_{m+1}}, \xi_{\sigma_2} \otimes \ldots \otimes \xi_{\sigma_{m+1}} \rangle_{Fock} = \\ =
    q_{\sigma_1 \mu_1} \ldots q_{\sigma_1, \mu_{t_0 - 1}} \langle a_{\mu_1} \ldots a_{\mu_{t_0 - 1}} a_{\mu_{t_0 + 1}} \ldots a_{\mu_{m + 1}}, a_{\sigma_2} \ldots a_{\sigma_{m+1}} \rangle_{\mathbf{v} - \mathbf{\delta}_{\{\sigma_1\}}} = \\ =
    q_{\sigma_1 \mu_1} \ldots q_{\sigma_1, \mu_{t_0 - 1}} \langle a_{\mu_1} \ldots a_{\mu_{t_0 - 1}} a_{\sigma_1}^* a_{\mu_{t_0}} a_{\mu_{t_0 + 1}} \ldots a_{\mu_{m + 1}}, a_{\sigma_2} \ldots a_{\sigma_{m+1}} \rangle_{\mathbf{v} - \mathbf{\delta}_{\{\sigma_1\}}} = \\ =
    \langle a_{\sigma_1}^* a_{\mu_1} \ldots a_{\mu_{m + 1}}, a_{\sigma_2} \ldots a_{\sigma_{m+1}} \rangle_{\mathbf{v} - \mathbf{\delta}_{\{\sigma_1\}}} = \langle a_\mu, a_\sigma \rangle_{\mathbf{v}}.
\end{gather*}
\end{proof}

In what follows we write $M_n$ for the $C^*$-algebra of $n$ by $n$ complex matrices.

\begin{lemma}
For $\mathbf{v} \in \mathbb{Z}_{+}^n$ define a $*$-representation $\pi_{\mathbf{v}} : \mathcal{W}_{\mathbf{v}} \rightarrow \mathcal{B}(\mathcal{H}_{\mathbf{v}})$:
    \[ \pi_{\mathbf{v}}(a_\mu a_\sigma^*)(a_\alpha) = a_\mu a_\sigma^* a_\alpha =  \langle a_\alpha, a_\sigma \rangle_{\mathbf{v}} a_\mu. \]
    
Then $\pi_{\mathbf{v}}$ is a faithful and surjective $*$-representation of $\mathcal{W}_{\mathbf{v}}$ and

\[\mathcal{W}_{\mathbf{v}} \simeq \mathcal{B}(\mathcal{H}_{\mathbf{v}}) \simeq M_{\frac{(v_1 + \ldots + v_n)!}{v_1!\ldots v_n!}}. \]
\end{lemma}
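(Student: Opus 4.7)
The plan is to view $\pi_{\mathbf{v}}$ as left multiplication restricted to the invariant subspace $\mathcal{H}_{\mathbf{v}}$, to check the $*$-homomorphism properties directly from Lemma 3, and then to prove bijectivity by a dimension count together with the observation that rank-one operators built from basis vectors span $\mathcal{B}(\mathcal{H}_{\mathbf{v}})$.

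First I would record the basic sizes. By Proposition 3 the monomials $a_\alpha$ with $occ(\alpha)=\mathbf{v}$ are linearly independent, so $\dim\mathcal{H}_{\mathbf{v}}=d:=\frac{(v_1+\ldots+v_n)!}{v_1!\ldots v_n!}$, and likewise the monomials $a_\mu a_\sigma^*$ with $occ(\mu)=occ(\sigma)=\mathbf{v}$ form a basis of $\mathcal{W}_{\mathbf{v}}$, so $\dim\mathcal{W}_{\mathbf{v}}=d^2$. By Lemma 3 the form $\langle\cdot,\cdot\rangle_{\mathbf{v}}$ turns $\mathcal{H}_{\mathbf{v}}$ into a $d$-dimensional Hilbert space, and the Gram matrix $G_{\alpha,\sigma}=\langle a_\alpha,a_\sigma\rangle_{\mathbf{v}}$ is invertible.

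Next I would verify that $\pi_{\mathbf{v}}$ is a well-defined $*$-representation. The formula makes sense on the basis $\{a_\mu a_\sigma^*\}$ of $\mathcal{W}_{\mathbf{v}}$ and extends by linearity. For multiplicativity, Lemma 3 gives $a_\sigma^* a_{\mu'}=\langle a_{\mu'},a_\sigma\rangle_{\mathbf{v}}\cdot 1$, so that inside $\mathcal{W}_{\mathbf{v}}$ one has
\[
(a_\mu a_\sigma^*)(a_{\mu'} a_{\sigma'}^*)=\langle a_{\mu'},a_\sigma\rangle_{\mathbf{v}}\,a_\mu a_{\sigma'}^*,
\]
and an immediate check on each $a_\alpha\in\mathcal{H}_{\mathbf{v}}$ shows that $\pi_{\mathbf{v}}(a_\mu a_\sigma^*)\pi_{\mathbf{v}}(a_{\mu'} a_{\sigma'}^*)$ produces the same scalar multiple of $a_\mu$. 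For the $*$-structure I would compute $\langle\pi_{\mathbf{v}}(a_\mu a_\sigma^*)a_\alpha,a_\beta\rangle_{\mathbf{v}}=\langle a_\alpha,a_\sigma\rangle_{\mathbf{v}}\langle a_\mu,a_\beta\rangle_{\mathbf{v}}$ and compare with $\langle a_\alpha,\pi_{\mathbf{v}}(a_\sigma a_\mu^*)a_\beta\rangle_{\mathbf{v}}=\overline{\langle a_\beta,a_\mu\rangle_{\mathbf{v}}}\langle a_\alpha,a_\sigma\rangle_{\mathbf{v}}$; Hermitian symmetry of $\langle\cdot,\cdot\rangle_{\mathbf{v}}$ makes them equal, so $\pi_{\mathbf{v}}(a_\mu a_\sigma^*)^*=\pi_{\mathbf{v}}(a_\sigma a_\mu^*)$.

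Finally I would dispatch surjectivity and injectivity together by a single dimension argument. Each operator $\pi_{\mathbf{v}}(a_\mu a_\sigma^*)$ is the rank-one map $a_\alpha\mapsto\langle a_\alpha,a_\sigma\rangle_{\mathbf{v}}\,a_\mu$, i.e.\ $|a_\mu\rangle\langle a_\sigma|$ in Dirac notation. The crucial claim is that these $d^2$ operators are linearly independent in $\mathcal{B}(\mathcal{H}_{\mathbf{v}})$. If $\sum_{\mu,\sigma}c_{\mu,\sigma}|a_\mu\rangle\langle a_\sigma|=0$, apply both sides to an arbitrary $a_\alpha$ and use linear independence of $\{a_\mu\}$ to obtain $\sum_\sigma c_{\mu,\sigma}G_{\alpha,\sigma}=0$ for every $\mu$ and $\alpha$. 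Invertibility of the Gram matrix then forces every $c_{\mu,\sigma}$ to vanish. Hence the image of $\pi_{\mathbf{v}}$ has dimension $d^2=\dim\mathcal{B}(\mathcal{H}_{\mathbf{v}})$, so $\pi_{\mathbf{v}}$ is surjective; coincidence with $\dim\mathcal{W}_{\mathbf{v}}=d^2$ makes it injective as well, yielding $\mathcal{W}_{\mathbf{v}}\simeq\mathcal{B}(\mathcal{H}_{\mathbf{v}})\simeq M_d$.

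The main obstacle I anticipate is exactly this last step: because the basis $\{a_\alpha\}$ is not orthonormal, one cannot appeal to the standard matrix-unit formula and must instead invoke invertibility of the Gram matrix, which is the genuine content borrowed from Lemma 3. Everything else is careful bookkeeping with the identity $a_\sigma^* a_\alpha=\langle a_\alpha,a_\sigma\rangle_{\mathbf{v}}\cdot 1$.
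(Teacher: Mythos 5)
Your proposal is correct and follows essentially the same route as the paper: surjectivity because the image contains enough rank-one operators to exhaust $\mathcal{B}(\mathcal{H}_{\mathbf{v}})$, then injectivity from the dimension count $\dim\mathcal{W}_{\mathbf{v}}=d^2=\dim\mathcal{B}(\mathcal{H}_{\mathbf{v}})$. The only difference is that you spell out the verifications the paper leaves implicit (the $*$-homomorphism property and the use of the invertible Gram matrix, i.e.\ positive-definiteness from Lemma 3, to see that the operators $\pi_{\mathbf{v}}(a_\mu a_\sigma^*)$ span everything), which is a sound elaboration rather than a new argument.
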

\begin{proof}
Recall that the family of all rank one operators on a finite-dimensional Hilbert space $\mathcal{H}$ generates the whole algebra $\mathcal{B}(\mathcal{H})$. Hence surjectivity of $\pi_{\mathbf{v}}$ follows from the fact that every rank one operator on $\mathcal{H}_{\mathbf{v}}$ is in the image of $\pi_{\mathbf{v}}$.

Observe that $\dim \mathcal{W}_{\mathbf{v}} = (\frac{(v_1 + \ldots + v_n)!}{v_1!\ldots v_n!})^2 = \dim \mathcal{B}(\mathcal{H}_{\mathbf{v}})$. Therefore $\pi_{\mathbf{v}}$ is injective, since it is surjective.
\end{proof}

Notice that $\mathcal{W}_{\mathbf{v}}$ and $\mathcal{W}_{\mathbf{u}}$ are not orthogonal with respect to multiplication for $\mathbf{v} \neq \mathbf{u}$. Hence we can not simply take direct sum of $\pi_{\mathbf{v}}$ to obtain a $*$-representation of the subalgebras $\mathcal{W}_k$ defined by (1). Nevertheless the following result holds

\begin{theorem}
    \begin{equation*}
        \mathcal{W}_k \simeq \bigoplus_{\mathbf{v} \leq \mathbf{k}^n} M_{\frac{(v_1 + \ldots + v_n)!}{v_1!\ldots v_n!}}
    \end{equation*}
\end{theorem}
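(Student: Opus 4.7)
The plan is to assemble a faithful $*$-representation
$\pi_k : \mathcal{W}_k \to \bigoplus_{\mathbf{v} \leq \mathbf{k}^n} \mathcal{B}(\mathcal{H}_{\mathbf{v}})$, identify each summand with $M_{d_{\mathbf{v}}}$ via Lemma 4 (where $d_{\mathbf{v}} = (v_1 + \cdots + v_n)!/(v_1! \cdots v_n!)$), and conclude by a dimension count.

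The first step is to extend each $\pi_{\mathbf{v}}$ of Lemma 4 from $\mathcal{W}_{\mathbf{v}}$ to the whole of $\mathcal{W}_k$. I will do this by restricting the Fock representation $\pi_F$ of $Isom_{q_{ij}}$ to the invariant subspace $\mathcal{H}_{\mathbf{v}} \subset T(\mathcal{H})$. Invariance holds because $\mathcal{W}_k \subset \mathcal{GICAR}_{q_{ij}}$ lies in the fixed-point subalgebra for $\Psi$, which respects the $\mathbb{Z}_+^n$-grading of $T(\mathcal{H})$: for $a_\mu a_\sigma^* \in \mathcal{W}_{\mathbf{u}}$, the Fock action $\pi_F(a_\sigma^*)$ sends $\mathcal{H}_{\mathbf{v}}$ into $\mathcal{H}_{\mathbf{v}-\mathbf{u}}$ when $\mathbf{u} \leq \mathbf{v}$ and to $0$ otherwise, after which $\pi_F(a_\mu)$ returns the result to $\mathcal{H}_{\mathbf{v}}$. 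That the extended $\pi_{\mathbf{v}}$ agrees with Lemma 4 on $\mathcal{W}_{\mathbf{v}}$ is Lemma 3, which identifies $\langle \cdot, \cdot \rangle_{\mathbf{v}}$ with the restriction of $\langle \cdot, \cdot \rangle_{Fock}$ to $\mathcal{H}_{\mathbf{v}}$. I then set $\pi_k := \bigoplus_{\mathbf{v} \leq \mathbf{k}^n} \pi_{\mathbf{v}}$.

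The key observation is the triangular vanishing $\pi_{\mathbf{v}}(\mathcal{W}_{\mathbf{u}}) = 0$ whenever $\mathbf{u} \not\leq \mathbf{v}$, which follows from the grading argument above. To verify injectivity, any $x \in \mathcal{W}_k$ decomposes uniquely as $x = \sum_{\mathbf{u} \leq \mathbf{k}^n} x_{\mathbf{u}}$ with $x_{\mathbf{u}} \in \mathcal{W}_{\mathbf{u}}$ by the linear independence provided by Proposition 3. Assuming $\pi_k(x) = 0$, the triangular vanishing reduces this, for each $\mathbf{v} \leq \mathbf{k}^n$, to $\sum_{\mathbf{u} \leq \mathbf{v}} \pi_{\mathbf{v}}(x_{\mathbf{u}}) = 0$. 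Strong induction on $|\mathbf{v}| := v_1 + \cdots + v_n$ finishes: at level $m$, the inductive hypothesis $x_{\mathbf{u}} = 0$ for $|\mathbf{u}| < m$ leaves $\pi_{\mathbf{v}}(x_{\mathbf{v}}) = 0$ for every $|\mathbf{v}| = m$, and the faithfulness of $\pi_{\mathbf{v}}|_{\mathcal{W}_{\mathbf{v}}}$ from Lemma 4 forces $x_{\mathbf{v}} = 0$.

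Finally, Proposition 3 and Lemma 4 give $\dim \mathcal{W}_k = \sum_{\mathbf{v} \leq \mathbf{k}^n} d_{\mathbf{v}}^2 = \dim \bigoplus_{\mathbf{v} \leq \mathbf{k}^n} M_{d_{\mathbf{v}}}$, so the injective $*$-homomorphism $\pi_k$ between finite-dimensional $C^*$-algebras of equal dimension is a $*$-isomorphism. The hard part is pinning down the simultaneous extension of all $\pi_{\mathbf{v}}$'s together with the triangular vanishing: as the paper remarks immediately before the theorem, the subspaces $\mathcal{W}_{\mathbf{u}}$ and $\mathcal{W}_{\mathbf{v}}$ are not orthogonal under multiplication, so one cannot simply take a direct sum of the Lemma 4 representations; the Fock realization handles both points at once by effectively diagonalizing the $\mathbb{Z}_+^n$-grading.
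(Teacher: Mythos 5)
Your proof is correct, but it takes a genuinely different route from the paper's. The paper argues purely algebraically: using the multiplication rule $\mathcal{W}_{\mathbf{v}} \cdot \mathcal{W}_{\mathbf{u}} \subset \mathcal{W}_{\max(\mathbf{v},\mathbf{u})}$, it peels off ideals from the top of the lattice downward --- first $\mathcal{W}_{\mathbf{k}^n}$, then inductively the images of the $\mathcal{W}_{\mathbf{v}}$ with $\sum_t v_t = nk-j+1$ in successive quotients, each of which is a full matrix algebra by Lemma 4 and hence splits off as a direct summand --- until only $\mathbb{C}$ remains. You instead build the isomorphism as an explicit faithful representation on $\bigoplus_{\mathbf{v}} \mathcal{H}_{\mathbf{v}}$ realized inside the Fock space, using the $\mathbb{Z}_+^n$-grading for invariance and the triangular vanishing $\pi_{\mathbf{v}}(\mathcal{W}_{\mathbf{u}})=0$ for $\mathbf{u}\nleq\mathbf{v}$, and close with a dimension count via Proposition 3. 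Your key steps are legitimate: Lemma 3 guarantees that the restriction of $\pi_F$ to $\mathcal{H}_{\mathbf{v}}$ agrees with the Lemma 4 representation on $\mathcal{W}_{\mathbf{v}}$ (one should also note that the subspaces $\mathcal{H}_{\mathbf{v}}$ are mutually orthogonal in the Fock inner product, so each is reducing and the restriction is indeed a $*$-representation), and the strong induction on $|\mathbf{v}|$ correctly extracts $x_{\mathbf{v}}=0$ from $\pi_k(x)=0$. What your route buys is efficiency downstream: your $\pi_k$ and triangular vanishing are precisely the representation and Lemma 5 that the paper has to construct \emph{after} Theorem 5 (via the units $1_{\mathbf{v}}^k$) in order to analyze the Bratteli diagram, so you obtain them for free. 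What the paper's route buys is independence from the Fock machinery at this stage and a direct handle on the central projections $1_{\mathbf{v}}^k$ and the summands $\mathcal{V}_{\mathbf{v}}^k = \mathcal{W}_{\mathbf{v}}\cdot 1_{\mathbf{v}}^k$, which are the objects actually manipulated in Lemmas 5--6 and Theorem 6.
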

\begin{proof}

We build $\mathcal{W}_k$ inductively beginning from multi-indices $\mathbf{v}$ with the largest value of $\sum_t \mathbf{v}_t$.

1) Put $\mathcal{W}_k^{(1)} = \mathcal{W}_k$. Then $\mathcal{W}_{\mathbf{k}^n} \subset \mathcal{W}_k^{(1)}$. By property (2) it is an ideal in $\mathcal{W}_k^{(1)}$. Lemma 4 implies that $\mathcal{W}_{\mathbf{k}^n} \simeq M_{\frac{(nk)!}{(k!)^n}}$. Let $\mathcal{W}_k^{(2)} = \mathcal{W}_k^{(1)} / \mathcal{W}_{\mathbf{k}^n}$. Then due to $\dim \mathcal{W}_k < \infty$ one has
\[\mathcal{W}_k \simeq M_{\frac{(nk)!}{(k!)^n}} \oplus \mathcal{W}_k^{(2)}. \]

2) Assume by induction that $\mathcal{W}_k \simeq J \oplus \mathcal{W}_k^{(j)}$ where \[ J = \bigcup_{v_1 + \ldots + v_n > nk - j + 1} \mathcal{W}_{\mathbf{v}} \simeq \bigoplus_{v_1 + \ldots + v_n > nk - j + 1} M_{\frac{(v_1 + \ldots + v_n)!}{v_1!\ldots v_n!}} \]
and $\mathcal{W}_k^{(j)} = \mathcal{W}_k / J$. Let $\sigma_j : \mathcal{W}_k \rightarrow \mathcal{W}_k^{(j)}$ be a projection. Let $\mathcal{W}_{\mathbf{v}}$ and $\mathcal{W}_{\mathbf{u}}$ in $\mathcal{W}_k$ be such that $\sum_t v_t = \sum_t u_t = nk - j + 1$. By property (2) if $\mathbf{v} \neq \mathbf{u}$ then $\mathcal{W}_{\mathbf{v}} \cdot \mathcal{W}_{\mathbf{u}} \subset \mathcal{W}_{\max(\mathbf{v},\mathbf{u})} \subset J$. Hence $\sigma_j(\mathcal{W}_{\mathbf{v}})$ and $\sigma_j(\mathcal{W}_{\mathbf{u}})$
are ideals in $\mathcal{W}_k^{(j)}$ such that $\sigma_j(\mathcal{W}_{\mathbf{v}}) \cdot \sigma_j(\mathcal{W}_{\mathbf{u}}) = 0$.  By Lemma 4 \[ \sigma_j(\mathcal{W}_{\mathbf{v}}) \simeq \mathcal{W}_{\mathbf{v}} \simeq M_{\frac{(v_1 + \ldots + v_n)!}{v_1!\ldots v_n!}}.\] Let \[ 
\mathcal{W}_k^{(j + 1)} = \mathcal{W}_k^{(j)} / \bigoplus_{v_1 + \ldots + v_n = nk - j + 1} \sigma_j(\mathcal{W}_{\mathbf{v}}).\] Then
\[\mathcal{W}_k^{(j)} = \bigoplus_{v_1 + \ldots + v_n = nk - j + 1} M_{\frac{(v_1 + \ldots + v_n)!}{v_1!\ldots v_n!}} \oplus \mathcal{W}_k^{(j+1)}. \]

3) To complete the proof it remains to note that with $j = nk$ we get $\mathcal{W}_k^{(nk)} \simeq \mathbb{C}$.
    
\end{proof}

Denote by $\mathcal{V}_{\mathbf{v}}^k$ the component $M_{\frac{(v_1 + \ldots + v_n)!}{v_1!\ldots v_n!}}$ of $\mathcal{W}_k$ from the decomposition above. 

\begin{remark}
Let $1_{\mathbf{v}}^k$ be a unit of $\mathcal{V}_{\mathbf{v}}^k$. Then the proof of Theorem 5 implies that $\mathcal{V}_{\mathbf{v}}^k$ coincides with $\mathcal{W}_{\mathbf{v}} \cdot 1_{\mathbf{v}}^k$. Hence,
   \begin{equation*}
        \mathcal{W}_k = \bigoplus_{\mathbf{v} \leq \mathbf{k}^n} \mathcal{V}_{\mathbf{v}}^k =  \bigoplus_{\mathbf{v} \leq \mathbf{k}^n} \mathcal{W}_{\mathbf{v}} \cdot 1_{\mathbf{v}}^k.
    \end{equation*}
\end{remark}

The structure of $\mathcal{W}_k$ is independent of the values of $q_{ij}$ as it follows from Theorem 5. For the analysis of embedding of $\mathcal{W}_k$ into $\mathcal{W}_{k + 1}$ we construct a special $*$-representation of $\mathcal{W}_k$. 

In Lemma 4 we have constructed a bijective $*$-representation $\pi_{\mathbf{v}}$ of $\mathcal{W}_{\mathbf{v}}$. Since $\mathcal{V}^k_{\mathbf{v}} =  \mathcal{W}_{\mathbf{v}} \cdot 1_{\mathbf{v}}^k$, we can define a bijective $*$-representation $\widehat{\pi}^k_{\mathbf{v}}$ \[ \widehat{\pi}^k_{\mathbf{v}} : \mathcal{V}^k_{\mathbf{v}} \rightarrow \mathcal{B}(\mathcal{H}_{\mathbf{v}}), \text{ } \widehat{\pi}^k_{\mathbf{v}}(x 1_{\mathbf{v}}^k) = \pi_{\mathbf{v}}(x), \text{ } x \in \mathcal{W}_{\mathbf{v}}. \]
Put \[ \mathcal{H}_k = \bigoplus_{\mathbf{v} \leq \mathbf{k}^n} \mathcal{H}_{\mathbf{v}}.  \]
Let $x \in \mathcal{W}_k$. Then there exists $y_{\mathbf{v}} \in \mathcal{W}_{\mathbf{v}}$ such that $x 1_{\mathbf{v}}^k = y_{\mathbf{v}} 1_{\mathbf{v}}^k$. Define the $*$-representation $\pi_k : \mathcal{W}_k \rightarrow \mathcal{B}(\mathcal{H}_k)$ by
\[ \pi_k(x) = \bigoplus_{\mathbf{v} \leq \mathbf{k}^n} \widehat{\pi}^k_{\mathbf{v}}(x 1_{\mathbf{v}}^k) = \bigoplus_{\mathbf{v} \leq \mathbf{k}^n} \widehat{\pi}^k_{\mathbf{v}}(y_{\mathbf{v}} 1_{\mathbf{v}}^k) = \bigoplus_{\mathbf{v} \leq \mathbf{k}^n} \pi_{\mathbf{v}}(y_{\mathbf{v}}). \]
$\pi_k$ is obviously faithful since if $x \in \ker \pi_k$ then $\widehat{\pi}^k_{\mathbf{v}}(x 1_{\mathbf{v}}^k) = 0$. Since $\widehat{\pi}^k_{\mathbf{v}}$ is faithful, $x 1_{\mathbf{v}}^k = 0$ for any $\mathbf{v} \leq \mathbf{k}^n$. 

\begin{lemma}
Let $\mathbf{u}, \mathbf{v} \in \mathbb{Z}_{+}^n$, $\mathbf{u}, \mathbf{v} \leq \mathbf{k}^n$. If $\mathbf{v} \nleq \mathbf{u}$ then $\pi_k(\mathcal{W}_{\mathbf{v}})|_{\mathcal{H}_{\mathbf{u}}} = 0$.
\end{lemma}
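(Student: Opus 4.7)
The plan is to reduce the lemma to the algebraic identity $x \cdot 1_{\mathbf{u}}^k = 0$ in $\mathcal{W}_k$, and then read it off from the block decomposition established in Theorem 5. For brevity set $|\mathbf{v}| := v_1 + \ldots + v_n$.

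By the construction of $\pi_k$ one has $\pi_k(x)|_{\mathcal{H}_{\mathbf{u}}} = \widehat{\pi}^k_{\mathbf{u}}(x \cdot 1_{\mathbf{u}}^k)$, and since $\widehat{\pi}^k_{\mathbf{u}}$ is a bijection of $\mathcal{V}_{\mathbf{u}}^k$ onto $\mathcal{B}(\mathcal{H}_{\mathbf{u}})$, it suffices to show $x \cdot 1_{\mathbf{u}}^k = 0$ for every $x \in \mathcal{W}_{\mathbf{v}}$. By the Remark following Theorem 5, $1_{\mathbf{u}}^k \in \mathcal{V}_{\mathbf{u}}^k = \mathcal{W}_{\mathbf{u}} \cdot 1_{\mathbf{u}}^k$, so $1_{\mathbf{u}}^k = w \cdot 1_{\mathbf{u}}^k$ for some $w \in \mathcal{W}_{\mathbf{u}}$. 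Consequently $x \cdot 1_{\mathbf{u}}^k = (xw) \cdot 1_{\mathbf{u}}^k$, and by inclusion (2), $xw \in \mathcal{W}_{\mathbf{v}} \cdot \mathcal{W}_{\mathbf{u}} \subseteq \mathcal{W}_{\max(\mathbf{v}, \mathbf{u})}$. The hypothesis $\mathbf{v} \nleq \mathbf{u}$ forces some coordinate $i$ with $v_i > u_i$, so $\max(\mathbf{v},\mathbf{u}) \neq \mathbf{u}$ and $|\max(\mathbf{v},\mathbf{u})| > |\mathbf{u}|$.

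It remains to verify that $y \cdot 1_{\mathbf{u}}^k = 0$ for every $y \in \mathcal{W}_{\mathbf{w}'}$ with $|\mathbf{w}'| > |\mathbf{u}|$. Specializing the induction in the proof of Theorem 5 to stage $j = nk - |\mathbf{u}| + 1$, the two-sided ideal
\[
J = \bigoplus_{|\mathbf{v}'| > |\mathbf{u}|} \mathcal{V}_{\mathbf{v}'}^k
\]
contains every such $\mathcal{W}_{\mathbf{w}'}$, whereas $1_{\mathbf{u}}^k$ is the unit of the block $\mathcal{V}_{\mathbf{u}}^k$, which sits in the complementary ideal $\bigoplus_{|\mathbf{v}'| \leq |\mathbf{u}|} \mathcal{V}_{\mathbf{v}'}^k$. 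In a direct sum of matrix algebras elements of distinct blocks annihilate each other under multiplication, so $J \cdot 1_{\mathbf{u}}^k = 0$, and in particular $(xw) \cdot 1_{\mathbf{u}}^k = 0$, as required.

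The only slightly subtle point is the inclusion $\mathcal{W}_{\mathbf{w}'} \subseteq J$ for $|\mathbf{w}'| > |\mathbf{u}|$: this is not visible from the bare statement of Theorem 5 but is exactly the content of its inductive construction, where the sum $\sum_{|\mathbf{v}'| > |\mathbf{u}|} \mathcal{W}_{\mathbf{v}'}$ is identified with the block sum $\bigoplus_{|\mathbf{v}'| > |\mathbf{u}|} \mathcal{V}_{\mathbf{v}'}^k$ at the corresponding stage.
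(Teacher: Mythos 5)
Your proof is correct, and it follows the same basic strategy as the paper: reduce the claim to the algebraic identity $\mathcal{W}_{\mathbf{v}} \cdot 1_{\mathbf{u}}^k = 0$ and derive it from the multiplication rule $\mathcal{W}_{\mathbf{v}} \cdot \mathcal{W}_{\mathbf{u}} \subset \mathcal{W}_{\max(\mathbf{v},\mathbf{u})}$ together with the block decomposition of $\mathcal{W}_k$ from Theorem 5. The difference lies in which ideal carries the argument. The paper takes the order ideal $J = \sum_{\mathbf{v} \leq \mathbf{j} \leq \mathbf{k}^n} \mathcal{W}_{\mathbf{j}}$ generated by $\mathcal{W}_{\mathbf{v}}$ itself, observes that $\mathcal{W}_{\mathbf{v}} \cdot 1_{\mathbf{u}}^k \subset J \cap \mathcal{V}_{\mathbf{u}}^k$, and asserts (with essentially no detail) that this intersection vanishes when $\mathbf{v} \nleq \mathbf{u}$. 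You instead first absorb a factor $w \in \mathcal{W}_{\mathbf{u}}$ with $1_{\mathbf{u}}^k = w\,1_{\mathbf{u}}^k$ to push everything into $\mathcal{W}_{\max(\mathbf{v},\mathbf{u})}$, whose total degree strictly exceeds $|\mathbf{u}|$, and then invoke the total-degree ideal $\bigoplus_{|\mathbf{v}'| > |\mathbf{u}|} \mathcal{V}_{\mathbf{v}'}^k$ appearing at stage $j = nk - |\mathbf{u}| + 1$ of the induction in Theorem 5. This buys you something concrete: the identification of that ideal with a sum of blocks not containing $\mathcal{V}_{\mathbf{u}}^k$ is explicitly established in the proof of Theorem 5, so the annihilation $J \cdot 1_{\mathbf{u}}^k = 0$ is immediate, whereas the paper's claim that its order ideal meets $\mathcal{V}_{\mathbf{u}}^k$ trivially requires an analogous (but unstated) determination of which blocks that ideal comprises. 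You correctly flag this reliance on the inductive construction rather than on the bare statement of Theorem 5; with that caveat made explicit, your argument is complete.
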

\begin{proof}
    Let \[ J = \bigcup_{\mathbf{v} \leq \mathbf{j} \leq \mathbf{k}^n} \mathcal{W}_{\mathbf{j}}.  \]
    $J$ is an ideal in $\mathcal{W}_k$ by property (1). If $\mathbf{v} \nleq \mathbf{u}$ then $J$ has the zero intersection with $\mathcal{V}_{\mathbf{u}}^k$. However $\mathcal{W}_{\mathbf{v}} \cdot 1_{\mathbf{u}}^k \subset J \cap \mathcal{V}_{\mathbf{u}}^k$, so $\mathcal{W}_{\mathbf{v}} \cdot 1_{\mathbf{u}}^k = 0$ and $\pi_k(\mathcal{W}_{\mathbf{v}})|_{\mathcal{H}_{\mathbf{u}}} = \widehat{\pi}_{\mathbf{u}}^k(\mathcal{W}_{\mathbf{v}} \cdot 1_{\mathbf{u}}^k) = 0$.
    
\end{proof}

Let $\mathbf{u} \geq \mathbf{v}$ and $P_{\mathbf{v}}^{\mathbf{u}} \in \mathcal{B}(\mathcal{H}_{\mathbf{u}})$ be the orthogonal projection onto the subspace of $\mathcal{H}_{\mathbf{u}}$ which is spanned by monomials $a_{\mu} \in \mathcal{H}_{\mathbf{u}}$ such that $occ(\mu_1,\ldots,\mu_{|\mathbf{v}|}) = \mathbf{v}$. Notice that this subspace has dimension equal to $\dim \mathcal{H}_{\mathbf{v}} \cdot \dim \mathcal{H}_{\mathbf{u}-\mathbf{v}}$. Put  $p_{\mathbf{v}}^{\mathbf{u}} = \pi^{-1}_{\mathbf{u}}(P_{\mathbf{v}}^{\mathbf{u}}) \in \mathcal{W}_{\mathbf{u}}$. If $\mathbf{u} \nleq \mathbf{w}$ then $\pi_k(p_{\mathbf{v}}^{\mathbf{u}})|_{\mathcal{H}_{\mathbf{w}}} = 0$ due to Lemma 5. Also observe that if $\mathbf{u_1} \leq \mathbf{w}$ and $\mathbf{u_2} \leq \mathbf{w}$ then $\pi_k(p_{\mathbf{v}}^{\mathbf{u_1}})|_{\mathcal{H}_{\mathbf{w}}} = \pi_k(p_{\mathbf{v}}^{\mathbf{u_2}})|_{\mathcal{H}_{\mathbf{w}}}$.

In the following lemma we express $1_{\mathbf{v}}^k$ as a sum of the projections $p_{\mathbf{v}}^{\mathbf{u}}$.
\begin{lemma}
Let $\mathbf{v} \in \mathbb{Z}_{+}^n$ such that $\mathbf{v} \leq \mathbf{k}^n$. Then
\[ 1_{\mathbf{v}}^k = \sum_{\substack{S \subset \{1,\ldots,n\} \\ \mathbf{v} + \delta_S \leq \mathbf{k}^n}} (-1)^{l(\delta_S)} p^{\mathbf{v}+\delta_S}_{\mathbf{v}}, \]
where $l(\delta_S) = \sum_{i=1}^n (\delta_S)_i$.
\end{lemma}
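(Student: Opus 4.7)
The plan is to verify the identity inside the faithful representation $\pi_k$ constructed just before Lemma 5, by checking equality block by block on the decomposition $\mathcal{H}_k = \bigoplus_{\mathbf{w} \leq \mathbf{k}^n} \mathcal{H}_\mathbf{w}$. Concretely, I fix $\mathbf{w} \leq \mathbf{k}^n$ and show that the restrictions of the two sides to $\mathcal{H}_\mathbf{w}$ coincide; since $\pi_k$ is injective on $\mathcal{W}_k$, this suffices.

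For the left-hand side, $\pi_k(1_\mathbf{v}^k)|_{\mathcal{H}_\mathbf{w}} = \widehat{\pi}_\mathbf{w}^k(1_\mathbf{v}^k \cdot 1_\mathbf{w}^k)$. Because $1_\mathbf{v}^k$ and $1_\mathbf{w}^k$ are units of distinct summands $\mathcal{V}_\mathbf{v}^k$, $\mathcal{V}_\mathbf{w}^k$ in the decomposition of Theorem 5, their product vanishes whenever $\mathbf{v} \neq \mathbf{w}$, and equals $1_\mathbf{v}^k$ when $\mathbf{v} = \mathbf{w}$. So the LHS restricts to $I_{\mathcal{H}_\mathbf{v}}$ on $\mathcal{H}_\mathbf{v}$ and to $0$ on every other block.

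For the right-hand side, Lemma 5 kills every summand with $\mathbf{v} + \delta_S \nleq \mathbf{w}$. In particular, if $\mathbf{v} \nleq \mathbf{w}$, every $S$ fails this, so the whole sum restricts to $0$, matching the LHS. When $\mathbf{v} \leq \mathbf{w}$, I apply the observation stated just before the lemma, which says $\pi_k(p_\mathbf{v}^{\mathbf{u}_1})|_{\mathcal{H}_\mathbf{w}} = \pi_k(p_\mathbf{v}^{\mathbf{u}_2})|_{\mathcal{H}_\mathbf{w}}$ for any $\mathbf{u}_1, \mathbf{u}_2 \leq \mathbf{w}$. Taking $\mathbf{u}_1 = \mathbf{v} + \delta_S$ and $\mathbf{u}_2 = \mathbf{w}$, I pull all surviving terms out as a common operator $\pi_k(p_\mathbf{v}^{\mathbf{w}})|_{\mathcal{H}_\mathbf{w}}$ multiplied by the scalar
\[ \sum_{S \subset T} (-1)^{|S|}, \qquad T := \{\, i : v_i < w_i \,\}. \]
A standard inclusion--exclusion calculation gives $1$ when $T = \emptyset$ (equivalently, $\mathbf{v} = \mathbf{w}$) and $0$ otherwise, so the RHS vanishes on every $\mathcal{H}_\mathbf{w}$ with $\mathbf{v} \neq \mathbf{w}$.

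The remaining case $\mathbf{v} = \mathbf{w}$ collapses to $\pi_k(p_\mathbf{v}^\mathbf{v})|_{\mathcal{H}_\mathbf{v}} = \pi_\mathbf{v}(p_\mathbf{v}^\mathbf{v}) = P_\mathbf{v}^\mathbf{v}$. This is the identity on $\mathcal{H}_\mathbf{v}$, because any $a_\mu \in \mathcal{H}_\mathbf{v}$ has length $|\mathbf{v}|$, so the defining condition $occ(\mu_1,\ldots,\mu_{|\mathbf{v}|}) = \mathbf{v}$ is automatic. The only substantive step is this inclusion--exclusion collapse after factoring out the common projection; everything else is bookkeeping on the block structure from Theorem 5 and the vanishing/constancy properties of the $p_\mathbf{v}^\mathbf{u}$ recorded immediately before the statement.
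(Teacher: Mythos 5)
Your proof is correct and follows essentially the same route as the paper: both verify the identity block by block in the faithful representation $\pi_k$, using Lemma 5 to kill terms with $\mathbf{v}+\delta_S \nleq \mathbf{w}$ and the constancy of $\pi_k(p_{\mathbf{v}}^{\mathbf{u}})|_{\mathcal{H}_{\mathbf{w}}}$ in $\mathbf{u}$ to collapse the survivors. The only cosmetic difference is that the paper cancels the surviving terms by pairing $S$ with $S\cup\{t\}$ for a single index $t$ with $u_t>v_t$, whereas you factor out the common projection and invoke $\sum_{S\subset T}(-1)^{|S|}=0$ for $T\neq\emptyset$ --- the same inclusion--exclusion in different packaging.
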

\begin{proof}
We show that the defined sum acts as the identity operator on $\mathcal{H}_{\mathbf{v}}$ and as the zero operator on $\mathcal{H}_{\mathbf{u}}$ for $\mathbf{u} \neq \mathbf{v}$ in the representation $\pi_k$. Consider the following cases:

1) If $\mathbf{v} = \mathbf{u}$ then whenever $\delta_S \neq \mathbf{0}^n$, we have \[ \pi_k(p^{\mathbf{v} + \delta_S}_{\mathbf{v}})|_{\mathcal{H}_{\mathbf{u}}} = 0.\] Hence \[ \sum_{\substack{S \subset \{1,\ldots,n\} \\ \mathbf{v} + \delta_S \leq \mathbf{k}^n}} (-1)^{l(\delta_S)} \pi_k(p^{\mathbf{v}+\delta_S}_{\mathbf{v}})|_{\mathcal{H}_{\mathbf{u}}} = \pi_k(p_{\mathbf{v}}^{\mathbf{v}})|_{\mathcal{H}_{\mathbf{u}}} = id_{\mathcal{H}_\mathbf{u}} .\]

2) If $\mathbf{v} \nleq \mathbf{u}$ then by Lemma 5 $\pi_k(p^{\mathbf{v}+\delta_S}_{\mathbf{v}})|_{\mathcal{H}_{\mathbf{u}}} = 0$ for arbitrary $S \subset \{1,\ldots,n\}$.

3) In other cases $\mathbf{v} < \mathbf{u} \leq \mathbf{k}^n$. If $\mathbf{u}_t > \mathbf{v}_t$ then for $S \subset \{1,\ldots,n\}$ one has $\mathbf{v}+\delta_S \leq \mathbf{u}$ if and only if $\mathbf{v}+\delta_{S \setminus \{t\}} \leq \mathbf{u}$, so
\[ \pi_k(p^{\mathbf{v}+\delta_S}_{\mathbf{v}})|_{\mathcal{H}_{\mathbf{u}}} = \pi_k(p^{\mathbf{v}+\delta_{S \setminus \{t\}}}_{\mathbf{v}})|_{\mathcal{H}_{\mathbf{u}}}. \]
Hence
\begin{equation*}
\begin{split}
   & \sum_{\substack{S \subset \{1,\ldots,n\} \\ \mathbf{v} + \delta_S \leq \mathbf{k}^n}} (-1)^{l(\delta_S)} \pi_k(p^{\mathbf{v}+\delta_S}_{\mathbf{v}})|_{\mathcal{H}_{\mathbf{u}}} = \\ & =
      \sum_{\substack{S \subset \{1,\ldots,n\} \setminus \{t\} \\ \mathbf{v} + \delta_S \leq \mathbf{k}^n}} ( (-1)^{ l(\delta_{S \setminus \{t\}}) } \pi_k(p^{\mathbf{v}+\delta_S}_{\mathbf{v}})|_{\mathcal{H}_{\mathbf{u}}} + (-1)^{ 1+ l(\delta_{S \setminus \{t\}}) } \pi_k(p_{\mathbf{v}}^{\mathbf{v}+\delta_S+\delta_{\{t\}}})|_{\mathcal{H}_{\mathbf{u}}}) = 0.
        \\
\end{split}
\end{equation*}
\end{proof}

Remind that every homomorphism $\phi : \mathcal{B}(V_1) \rightarrow \mathcal{B}(V_2)$ with finite-dimensional $V_1$ and $V_2$ is determined up to a unitary equivalence by a natural number $m$ and has the form \[ \phi(x) = \left[ \begin{array}{cc}
1_m \otimes x & 0 \\
0 & 0 \end{array} \right]. \] This number can be determined as \[ m = \frac{\dim \text{Im } \phi(id_{V_1}) }{\dim V_1}. \]

\begin{theorem}
    Let $\mathbf{v}, \mathbf{u} \in \mathbb{Z}_{+}^n$ be such that $\mathbf{v} \leq \mathbf{k}^n$, $\mathbf{u} \leq \mathbf{(k + 1)}^n$. Then $\mathcal{V}_{\mathbf{v}}^k$ is embedded into $\mathcal{V}_{\mathbf{u}}^{k+1}$ with nonzero multiplicity $m_{\mathbf{v},\mathbf{u}}$ if and only if $\mathbf{0}^n \leq \mathbf{u} - \mathbf{v} \leq \mathbf{1}^n$ with $u_t > v_t$ only in the case when $v_t = k$. If the multiplicity is nonzero then it is equal to $\frac{(\sum_t u_t - v_t)!}{\prod_t (u_t - v_t)!}$.
\end{theorem}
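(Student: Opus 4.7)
The plan is to read off $m_{\mathbf{v},\mathbf{u}}$ directly from the faithful representation $\pi_{k+1}$ constructed just before Lemma 5. The embedding $\phi:\mathcal{V}_\mathbf{v}^k \hookrightarrow \mathcal{V}_\mathbf{u}^{k+1}$ is the inclusion $\mathcal{W}_k \subset \mathcal{W}_{k+1}$ followed by multiplication by the central projection $1_\mathbf{u}^{k+1}$; since $\pi_{k+1}(1_\mathbf{u}^{k+1})$ is the identity on $\mathcal{H}_\mathbf{u}$ and zero on every other $\mathcal{H}_\mathbf{w}$, the multiplicity formula recalled just before the theorem specialises to
\[
m_{\mathbf{v},\mathbf{u}} \;=\; \frac{\dim \mathrm{Im}\,\pi_{k+1}(1_\mathbf{v}^k)\big|_{\mathcal{H}_\mathbf{u}}}{\dim \mathcal{H}_\mathbf{v}}.
\]
So the whole proof reduces to evaluating $\pi_{k+1}(1_\mathbf{v}^k)$ on each summand $\mathcal{H}_\mathbf{u}$ and reading off its rank.

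For this I expand $1_\mathbf{v}^k$ using Lemma 6 (an identity that remains valid in $\mathcal{W}_{k+1}$) and treat the alternating sum term by term. By Lemma 5 each summand $\pi_{k+1}(p_\mathbf{v}^{\mathbf{v}+\delta_S})|_{\mathcal{H}_\mathbf{u}}$ vanishes whenever $\mathbf{v}+\delta_S \not\leq \mathbf{u}$, and by the observation stated immediately before Lemma 6 for all remaining $S$ it equals one common projection $Q$ independent of $S$. Hence
\[
\pi_{k+1}(1_\mathbf{v}^k)\big|_{\mathcal{H}_\mathbf{u}} \;=\; Q \cdot \sum_{\substack{S:\,v_i+(\delta_S)_i \leq \min(k,u_i)\\ \text{for all } i}} (-1)^{l(\delta_S)}.
\]
The key combinatorial step is that this scalar sum factors over coordinates: index $i$ contributes $1$ when $v_i=\min(k,u_i)$, contributes $1+(-1)=0$ when $v_i<\min(k,u_i)$, and makes the constraint unsatisfiable (so the entire sum is $0$) when $v_i>\min(k,u_i)$. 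Therefore the multiplicity is nonzero precisely when $v_i=\min(k,u_i)$ for every $i$.

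Under the standing hypotheses $\mathbf{v}\leq \mathbf{k}^n$ and $\mathbf{u}\leq \mathbf{(k+1)}^n$, the condition ``$v_i=\min(k,u_i)$ for all $i$'' unpacks exactly to ``$0\leq u_i-v_i\leq 1$, with $u_i>v_i$ only if $v_i=k$'', giving the first assertion. When the condition holds, choosing $S=\{i:u_i>v_i\}$ yields $\mathbf{v}+\delta_S=\mathbf{u}$, so $Q=\pi_{k+1}(p_\mathbf{v}^\mathbf{u})|_{\mathcal{H}_\mathbf{u}}=P_\mathbf{v}^\mathbf{u}$, whose rank was recorded just before Lemma 6 to equal $\dim\mathcal{H}_\mathbf{v}\cdot\dim\mathcal{H}_{\mathbf{u}-\mathbf{v}}$. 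Dividing by $\dim\mathcal{H}_\mathbf{v}$ yields $m_{\mathbf{v},\mathbf{u}}=\dim\mathcal{H}_{\mathbf{u}-\mathbf{v}}=\frac{(\sum_t (u_t-v_t))!}{\prod_t(u_t-v_t)!}$. The main obstacle is the combinatorial bookkeeping in the alternating sum: ensuring the factorisation over coordinates is carried out correctly and translating the three boundary cases $v_i<\min(k,u_i)$, $v_i=\min(k,u_i)$, $v_i>\min(k,u_i)$ cleanly into the theorem's stated conditions. Once that is pinned down, everything else follows from direct invocations of Lemmas 5 and 6 together with the already-established rank formula for $P_\mathbf{v}^\mathbf{u}$.
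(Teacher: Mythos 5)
Your proposal is correct and follows essentially the same route as the paper: expand $1_{\mathbf{v}}^k$ via Lemma 6, kill terms with Lemma 5, identify the surviving terms with the single projection $\pi_{k+1}(p_{\mathbf{v}}^{\mathbf{u}})|_{\mathcal{H}_{\mathbf{u}}}=P_{\mathbf{v}}^{\mathbf{u}}$ via the observation preceding Lemma 6, and divide its rank $\dim\mathcal{H}_{\mathbf{v}}\cdot\dim\mathcal{H}_{\mathbf{u}-\mathbf{v}}$ by $\dim\mathcal{H}_{\mathbf{v}}$. Your coordinate-wise factorisation of the alternating scalar sum is just a cleaner packaging of the paper's pairing of $S$ with $S\cup\{t\}$ (the ``case 3 of Lemma 6'' argument), so there is no substantive difference.
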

\begin{proof}

Suppose $\mathbf{v} \nleq \mathbf{u}$. Then by Lemma 5 for $S \subset \{1,\ldots,n\}$ one has $\pi_{k+1}(p^{\mathbf{v}+\delta_S}_{\mathbf{v}})|_{\mathcal{H}_{\mathbf{u}}}$, so
\[ \pi_{k+1}(1_{\mathbf{v}}^k) = \sum_{\substack{S \subset \{1,\ldots,n\} \\ \mathbf{v} + \delta_S \leq \mathbf{k}^n}} (-1)^{l(\delta_S)} \pi_{k+1}(p^{\mathbf{v}+\delta_S}_{\mathbf{v}})|_{\mathcal{H}_{\mathbf{u}}} = 0 .\]
Hence for the embedding to be nonzero we need $\mathbf{v} \leq \mathbf{u}$.

If there exists $t$ such that $\min(u_t,k) > \mathbf{v}_t$ then using the same argument as in the case 3 of Lemma 6 one has $\pi_{k + 1}(1_{\mathbf{v}}^k)|_{\mathcal{H}_{\mathbf{u}}} = 0$. Hence for the embedding to be nonzero we need to have $\mathbf{u} - \mathbf{v} \leq \mathbf{1}^n$ and $u_t = k + 1$ whenever $u_t > v_t$.

Hence for $t = 1,\ldots,n$ we have either $u_t = v_t$ or $u_t = k + 1$, so if $S \subset \{1,\ldots,n\}$ is non-empty then $\mathbf{v} + \delta_S \nleq \min(\mathbf{u},\mathbf{k}^n)$ and $\pi_{k + 1}(1_{\mathbf{v}}^k)|_{\mathcal{H}_{\mathbf{u}}} = \pi_{k + 1}(p_{\mathbf{v}}^{\mathbf{v}})|_{\mathcal{H}_{\mathbf{u}}}$.

Now we calculate the multiplicity $m_{\mathbf{v},\mathbf{u}}$ when the embedding is nonzero:

\begin{equation*}
\begin{split}
m_{\mathbf{v},\mathbf{u}} = & \frac{\dim \{\pi_{k+1}(1_{\mathbf{v}}^k)(\mathcal{H}_{\mathbf{u}}) \} }{\dim \mathcal{H}_{\mathbf{v}}} = \frac{\dim \{\pi_{k+1}(p_{\mathbf{v}}^{\mathbf{v}})(\mathcal{H}_{\mathbf{u}}) \} }{\dim \mathcal{H}_{\mathbf{v}}} = \\& \frac{\dim \{\pi_{k+1}(p_{\mathbf{v}}^{\mathbf{u}})(\mathcal{H}_{\mathbf{u}}) \} }{\dim \mathcal{H}_{\mathbf{v}}} =
\frac{\dim \mathcal{H}_{\mathbf{v}} \dim \mathcal{H}_{\mathbf{u}-\mathbf{v}} }{\dim \mathcal{H}_{\mathbf{v}}} = \frac{(\sum_t \mathbf{u}_t - \mathbf{v}_t)!}{\prod_t (\mathbf{u}_t - \mathbf{v}_t)!}. \\
\end{split}
\end{equation*}

\end{proof}

Hence we proved that the multiplicities of the embeddings do not depend on $q_{ij}$. Since two AF algebras are isomorphic if they have equal Bratelli diagrams, we have $\mathcal{GICAR}_{q_{ij}} \simeq \mathcal{GICAR}_0$.

\section{Faithfulness of the Fock representation}
In this Section we prove that the Fock representation of $Isom_{q_{ij}}$ is faithful. To do this we apply the second part of Proposition 1. In the following lemma we describe an automorphism which interwines $\pi_F$ with the action $\Psi$ of $\mathbb{T}^n$ defined in Section 3.
\begin{lemma}
For every $\mathbf{w} \in \mathbb{T}^n$ there exists an automorphism $\Psi^{Fock}_{\mathbf{w}}$ of $\pi_F(Isom_{q_{ij}})$ such that $\Psi^{Fock}_{\mathbf{w}} \circ \pi_F = \pi_F \circ \Psi_{\mathbf{w}}$.
\end{lemma}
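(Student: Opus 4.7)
The natural candidate is conjugation by a diagonal unitary on the Fock space. Concretely, define $U_{\mathbf{w}} : T(\mathcal{H}) \to T(\mathcal{H})$ on the tensor basis by
\[ U_{\mathbf{w}}(\Omega) = \Omega, \qquad U_{\mathbf{w}}(\xi_{i_1} \otimes \ldots \otimes \xi_{i_k}) = w_{i_1} \cdots w_{i_k} \, \xi_{i_1} \otimes \ldots \otimes \xi_{i_k}, \]
and extend by linearity. The plan is then to show (i) that $U_{\mathbf{w}}$ is well-defined and unitary with respect to $\langle \cdot, \cdot \rangle_{Fock}$, and (ii) that conjugation by $U_{\mathbf{w}}$ sends $\pi_F(a_i)$ to $w_i \pi_F(a_i)$. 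Granted these, we set $\Psi^{Fock}_{\mathbf{w}}(x) = U_{\mathbf{w}} x U_{\mathbf{w}}^*$, which is an automorphism of $\mathcal{B}(T(\mathcal{H}))$ that restricts to an automorphism of $\pi_F(Isom_{q_{ij}})$ since it preserves the generating set, and the intertwining identity $\Psi^{Fock}_{\mathbf{w}} \circ \pi_F = \pi_F \circ \Psi_{\mathbf{w}}$ is immediate from (ii).

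For step (ii), recall that $\pi_F(a_i)$ is the creation operator $\xi_{i_1} \otimes \ldots \otimes \xi_{i_k} \mapsto \xi_i \otimes \xi_{i_1} \otimes \ldots \otimes \xi_{i_k}$. A direct computation on basis vectors gives
\[ U_{\mathbf{w}} \pi_F(a_i) U_{\mathbf{w}}^*(\xi_{i_1} \otimes \ldots \otimes \xi_{i_k}) = w_i \, \xi_i \otimes \xi_{i_1} \otimes \ldots \otimes \xi_{i_k} = w_i \, \pi_F(a_i)(\xi_{i_1} \otimes \ldots \otimes \xi_{i_k}), \]
so $U_{\mathbf{w}} \pi_F(a_i) U_{\mathbf{w}}^* = w_i \pi_F(a_i) = \pi_F(\Psi_{\mathbf{w}}(a_i))$, which is what we need.

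The key point is (i), and it hinges on the observation that $\langle \xi_{i_1} \otimes \ldots \otimes \xi_{i_k}, \xi_{j_1} \otimes \ldots \otimes \xi_{j_k} \rangle_{Fock}$ vanishes unless the two multi-indices have the same occurrence vector. This follows by induction on $k$ from the recursive definition of the Fock form: every nonzero term on the right-hand side carries a factor $\delta_{j_1 i_t}$, so one index is removed from each side with the same value, and by induction the remaining multisets must coincide. Consequently, on any matched pair of basis vectors, $U_{\mathbf{w}}$ multiplies by $w_{i_1}\cdots w_{i_k}\overline{w_{j_1}\cdots w_{j_k}} = 1$, hence $U_{\mathbf{w}}$ preserves $\langle \cdot,\cdot\rangle_{Fock}$. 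Since $U_{\mathbf{w}^{-1}}$ is an obvious inverse, $U_{\mathbf{w}}$ is a unitary on the Hilbert space completion of $T(\mathcal{H})$, and the construction is complete. The only subtlety, which is the main obstacle worth naming, is checking this orthogonality property of the deformed inner product; once it is in place everything else reduces to routine book-keeping on basis vectors.
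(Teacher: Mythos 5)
Your proof is correct, but it takes a genuinely different route from the paper's. The paper does not construct the unitary explicitly: it sets $A_i' = w_i\pi_F(a_i)$, observes that these operators again satisfy the $q_{ij}$-relations and kill the vacuum under $A_i'^*$, and then invokes the uniqueness (up to unitary equivalence) of the Fock representation to produce a unitary $U$ with $U^*\pi_F(a_i)U = A_i'$; since the $A_i'$ generate the same $C^*$-algebra as the $\pi_F(a_i)$, conjugation by $U$ is the desired automorphism. You instead exhibit the intertwining unitary concretely as the diagonal operator $U_{\mathbf{w}}$ on the tensor basis, which forces you to verify unitarity against the deformed inner product; your reduction of this to the fact that $\langle \xi_{i_1}\otimes\cdots\otimes\xi_{i_k}, \xi_{j_1}\otimes\cdots\otimes\xi_{j_k}\rangle_{Fock}$ vanishes unless the two multi-indices have equal occurrence vectors is the right key observation, and your induction from the recursive definition (each nonzero term carries $\delta_{j_1 i_t}$, removing one matching index from each side) establishes it. The trade-off is clear: the paper's argument is shorter but leans on the uniqueness theorem for the Fock representation as a black box, while yours is self-contained, needing only the recursion for $\langle\cdot,\cdot\rangle_{Fock}$ and its positive-definiteness (to know the completion is a genuine Hilbert space on which $U_{\mathbf{w}}$ extends isometrically). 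One small point worth making explicit in your write-up: since the tensor basis is not orthogonal, the adjoint of a ``diagonal'' operator need not be diagonal a priori, so you should (as you implicitly do) first establish that $U_{\mathbf{w}}$ is isometric and surjective, and only then identify $U_{\mathbf{w}}^* = U_{\mathbf{w}}^{-1} = U_{\overline{\mathbf{w}}}$ before running the conjugation computation in step (ii).
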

\begin{proof}
Let $A_i' = \mathbf{w}_i \pi_F(a_i)$ and $\Omega$ be a cyclic vector such that $\pi_F(a_i^*)(\Omega) = 0$, $i = 1,\ldots,n$. Then $A_i'$ satisfy $q_{ij}$-commutation relations and $\Omega$ is a cyclic vector such that $A_i'^*(\Omega) = 0$, $i = 1,\ldots,n$. By the uniqueness of the Fock representation, there is a unitary $U$ acting on the deformed Fock space which implements an isomorphism between $\pi_F(Isom_{q_{ij}})$ and the $C^*$-algebra generated by $A_i'$, i.e. \[U^* \pi_F(a_i) U = A_i', \text{ } i = 1,\ldots,n.\] These algebras coincide because $\pi_F(a_i) = \overline{\mathbf{w}_i}a_i'$, so $U$ implements an automorphism of $\pi_F(Isom_{q_{ij}})$. Let $\Psi^{Fock}_{\mathbf{w}}(x) = U^* x U$. Then \[ \Psi^{Fock}_{\mathbf{w}}(\pi_F(a_i)) = U^* \pi_F(a_i) U =A_i' = \mathbf{w}_i \pi_F(a_i) = \pi_F(\mathbf{w}_i a_i) = \pi_F(\Psi_{\mathbf{w}}(a_i)). \] Since the relation $\Psi^{Fock}_{\mathbf{w}} \circ \pi_F = \pi_F \circ \Psi_{\mathbf{w}}$ holds on $\pi_F(a_i)$ and $\{\pi_F(a_i) \text{ } | \text{ } i = 1,\ldots, n \}$ generates $\pi_F(Isom_{q_{ij}})$, the proof is completed.
\end{proof}

Now Proposition 1 can be applied to the action $\Psi$ and the representation $\pi_F$.

\begin{theorem}
The Fock representation $\pi_F$ of the $C^*$-algebra $Isom_{q_{ij}}$ is faithful.
\end{theorem}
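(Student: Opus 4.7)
The plan is to combine Lemma 8 with Proposition 1(2) to reduce faithfulness of $\pi_F$ to faithfulness on the fixed point subalgebra. Lemma 8 produces automorphisms $\Psi^{Fock}_{\mathbf{w}}$ of $\pi_F(Isom_{q_{ij}})$ making $\pi_F$ equivariant with respect to the two $\mathbb{T}^n$-actions, so Proposition 1(2) reduces the problem to showing that $\pi_F$ is injective on $(Isom_{q_{ij}})^{\Psi} = \mathcal{GICAR}_{q_{ij}}$.

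Because $\mathcal{GICAR}_{q_{ij}}$ is AF with filtration $\bigcup_k \mathcal{W}_k$ (Theorem 3), injectivity on $\mathcal{GICAR}_{q_{ij}}$ is equivalent to injectivity on every $\mathcal{W}_k$. By Theorem 5 each $\mathcal{W}_k = \bigoplus_{\mathbf{v} \leq \mathbf{k}^n} \mathcal{V}_{\mathbf{v}}^k$ is a finite direct sum of simple matrix algebras, so a $*$-homomorphism out of $\mathcal{W}_k$ is faithful iff each central unit $1_{\mathbf{v}}^k$ is sent to a non-zero element. The remaining task is therefore to show $\pi_F(1_{\mathbf{v}}^k) \neq 0$ for every $\mathbf{v} \leq \mathbf{k}^n$ and every $k$.

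To establish this I will apply $\pi_F(1_{\mathbf{v}}^k)$ to the test vector $\eta = \pi_F(a_\alpha)\Omega$ for some multi-index $\alpha$ with $occ(\alpha) = \mathbf{v}$; this vector is non-zero by positive definiteness of the Fock inner product. Using Lemma 6 one has
\[ \pi_F(1_{\mathbf{v}}^k)\eta = \sum_{\substack{S \subset \{1,\ldots,n\} \\ \mathbf{v}+\delta_S \leq \mathbf{k}^n}} (-1)^{l(\delta_S)} \pi_F(p_{\mathbf{v}}^{\mathbf{v}+\delta_S})\eta, \]
where each $p_{\mathbf{v}}^{\mathbf{v}+\delta_S} \in \mathcal{W}_{\mathbf{v}+\delta_S}$ is a linear combination of monomials $a_\mu a_\sigma^*$ with $occ(\sigma) = \mathbf{v}+\delta_S$. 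For $S \neq \emptyset$, any such $\sigma$ contains strictly more copies of some index $i \in S$ than $\alpha$ does, so the $q_{ij}$-commuting isometry relations should collapse $\pi_F(a_\sigma^*)\pi_F(a_\alpha)\Omega$ to an expression ending in an unmatched $a_i^*$ applied to $\Omega$, which is zero. Hence only the $S = \emptyset$ term survives, and since $P_{\mathbf{v}}^{\mathbf{v}}$ is the identity of $\mathcal{H}_{\mathbf{v}}$ we have $p_{\mathbf{v}}^{\mathbf{v}} = 1_{\mathcal{W}_{\mathbf{v}}}$, leaving $\pi_F(1_{\mathcal{W}_{\mathbf{v}}})\eta$.

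To conclude, Lemma 3 identifies the Fock inner product on the subspace $\mathcal{F}_{\mathbf{v}}$ of the Fock space spanned by $\{\pi_F(a_\beta)\Omega \mid occ(\beta) = \mathbf{v}\}$ with the positive-definite form $\langle \cdot, \cdot \rangle_{\mathbf{v}}$ on $\mathcal{H}_{\mathbf{v}}$, so the map $a_\beta \mapsto \pi_F(a_\beta)\Omega$ is a Hilbert-space isomorphism conjugating the faithful representation $\pi_{\mathbf{v}}$ of Lemma 4 to the restriction of $\pi_F|_{\mathcal{W}_{\mathbf{v}}}$ to $\mathcal{F}_{\mathbf{v}}$. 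In particular $\pi_F(1_{\mathcal{W}_{\mathbf{v}}})$ acts as the identity on $\mathcal{F}_{\mathbf{v}}$, giving $\pi_F(1_{\mathcal{W}_{\mathbf{v}}})\eta = \eta \neq 0$. I expect the main technical hurdle to be the combinatorial step in the third paragraph: verifying that $\pi_F(a_\sigma^*)\pi_F(a_\alpha)\Omega$ vanishes whenever $occ(\sigma)$ exceeds $occ(\alpha)$ in some coordinate. This relies crucially on the isometry relation $a_i^* a_i = 1$ rather than the more complicated $q_{ii}$-deformed relation of the full $q_{ij}$-$\mathcal{CCR}$, which is precisely why the argument works at the level of $Isom_{q_{ij}}$ but is not yet available for the universal $q_{ij}$-$\mathcal{CCR}$.
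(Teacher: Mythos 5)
Your proof is correct, and it shares the paper's overall architecture --- Lemma 8 together with Proposition 1(2) reduces the problem to the fixed point subalgebra $\mathcal{GICAR}_{q_{ij}}$, and the AF filtration $\bigcup_k \mathcal{W}_k$ reduces it further to injectivity of $\pi_F$ on each finite-dimensional $\mathcal{W}_k$ --- but it diverges at the decisive step. The paper establishes injectivity of $\pi_F\vert_{\mathcal{W}_k}$ by invoking the main result of \cite{Proskur_faithfulness}, namely that $\pi_F$ is already faithful on the dense $*$-subalgebra generated by the $a_i$; since an injective $*$-homomorphism of a finite-dimensional $C^*$-algebra is isometric, this gives $\norm{\pi_F(x)} = \norm{x}$ on the dense union and hence injectivity on the closure. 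You instead give a self-contained verification that no minimal central projection $1_{\mathbf{v}}^k$ is killed: on the test vector $\pi_F(a_\alpha)\Omega$ with $occ(\alpha) = \mathbf{v}$ the inclusion--exclusion formula of Lemma 6 collapses to the single term $p_{\mathbf{v}}^{\mathbf{v}} = 1_{\mathcal{W}_{\mathbf{v}}}$, because for $S \neq \emptyset$ every monomial $a_\mu a_\sigma^*$ occurring in $p_{\mathbf{v}}^{\mathbf{v}+\delta_S}$ leaves an unmatched $a_i^*$ adjacent to $\Omega$ after normal ordering (the step you flag as the technical hurdle does go through: the relations $a_i^*a_i = 1$ and $a_i^*a_j = q_{ij}a_ja_i^*$ reduce $a_\sigma^* a_\alpha$ to a single scalar multiple of a normally ordered monomial, and a simple count of letters shows its starred part is nonempty when $occ_i(\sigma) > occ_i(\alpha)$); Lemma 3 then identifies $\langle\cdot,\cdot\rangle_{\mathbf{v}}$ with the Fock inner product so that $1_{\mathcal{W}_{\mathbf{v}}}$ acts as the identity on the corresponding slice of the Fock space. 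What your route buys is independence from the external algebraic-faithfulness reference; what the paper's route buys is brevity, since all of this combinatorics is absorbed into the citation.
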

\begin{proof}

If $x \in \bigcup_{k} \mathcal{W}_k$ then $x \in\mathcal{W}_k$ for a sufficiently large $k$. Since $1 \in \mathcal{W}_k$, $\norm{x} = \norm{x}_{\mathcal{W}_k}$. It follows from the main result of \cite{Proskur_faithfulness} that $\pi_F$ is faithful on the dense $*-$subalgebra of $Isom_{q_{ij}}$ generated by $a_i, a_i^*$, $i = 1,\ldots,n$. Hence, $\pi_F \vert_{\mathcal{W}_k}$ is an injective $*$-homomorphism from the $C^*$-algebras $\mathcal{W}_k$ to $\pi_F(Isom_{q_{ij}})$, so $\norm{x} = \norm{x}_{\mathcal{W}_k} = \norm{\pi_F(x)}$. Hence by Proposition 1 $\pi_F$ is faithful on $Isom_{q_{ij}}$.
\end{proof}

\section{Description of the ideal $\mathcal{K}$ in $Isom_{q_{ij}}$}
In this Section we describe an ideal $\mathcal{K}$ in $Isom_{q_{ij}}$ which is isomorphic to the algebra of compact operators.

Recall that $\mathcal{K}$ can be described as a universal $C^*$-algebra generated by $e_{ij}$, $i,j \geq 0$ satisfying the relations $e_{ij} e_{kl} = \delta_{jk}e_{il}$, $e_{ij}^* = e_{ji}$ (see \cite{Black}).

When $q_{ij} = 0$ for $i, j = 1,\ldots,n$, we have $Isom_{0} \simeq \mathcal{KO}_n$, which is an extension of $\mathcal{O}_n$ by an ideal isomorphic to $\mathcal{K}$. In this case it is generated by $p = 1 - \sum_k a_k a_k^*$. Notice that $p$ is a nontrivial projection such that \[ p s_i = 0, \text{ } i = 1,\ldots,n.\] In the next theorem we prove that the same conditions are sufficient for an element $p$ in $Isom_{q_{ij}}$ to generate an ideal isomorphic to $\mathcal{K}$.

\begin{theorem}
    Let $p \in Isom_{q_{ij}}$ be a nontrivial projection such that  \[ p s_i = 0, \text{ } i = 1,\ldots,n.\]
    
    Then the ideal generated by $p$ is isomorphic to $\mathcal{K}$.
\end{theorem}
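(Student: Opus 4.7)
The plan is to use the faithfulness of the Fock representation established in Theorem 7 to transport the question into $\mathcal{B}(T(\mathcal{H}))$, where the ideal in question can be identified explicitly with the compact operators. Since $\pi_F$ is faithful, its restriction to any closed ideal is an isometric $*$-isomorphism onto the corresponding image; so it suffices to show that the ideal generated by $\pi_F(p)$ in $\pi_F(Isom_{q_{ij}})$ is isomorphic to $\mathcal{K}(T(\mathcal{H}))$.

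The first step is to identify $\pi_F(p)$ as the rank-one projection onto the vacuum $\Omega$. From $pa_i = 0$ we get $\pi_F(p)\pi_F(a_i) = 0$, so the range of the projection $\pi_F(p)$ is orthogonal, with respect to $\langle \cdot, \cdot\rangle_{Fock}$, to $\pi_F(a_i)T(\mathcal{H})$ for every $i$. Since $\pi_F(a_i)$ acts as a creation operator $\xi_\alpha \mapsto \xi_i \otimes \xi_\alpha$, the combined image $\sum_i \pi_F(a_i)T(\mathcal{H})$ has closed linear span equal to $\bigoplus_{k\geq 1} \mathcal{H}^{\otimes k}$, whose Fock-orthogonal complement in $T(\mathcal{H})$ is precisely $\mathbb{C}\Omega$ (using that the $\mathcal{H}^{\otimes k}$ are pairwise orthogonal by construction). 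Hence $\mathrm{Range}(\pi_F(p))\subseteq \mathbb{C}\Omega$. Because $p$ is nontrivial and $\pi_F$ is faithful, $\pi_F(p)\neq 0$, so $\pi_F(p)$ must equal $P_\Omega$.

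The second step is to describe the (closed) ideal $J$ generated by $P_\Omega$ inside $\pi_F(Isom_{q_{ij}})$. For $A, B \in \pi_F(Isom_{q_{ij}})$ one computes
\[ AP_\Omega B \,:\, v \longmapsto \langle \Omega, Bv\rangle\, A\Omega = \langle B^*\Omega, v\rangle\, A\Omega, \]
i.e.\ $AP_\Omega B$ is the rank-one operator $|A\Omega\rangle\langle B^*\Omega|$. Since $\Omega$ is cyclic for $\pi_F(Isom_{q_{ij}})$ (the vectors $\pi_F(a_\mu)\Omega = \xi_\mu$ span a dense subspace of $T(\mathcal{H})$) and $\pi_F(Isom_{q_{ij}})$ is a $*$-algebra, both $A\Omega$ and $B^*\Omega$ range over dense subsets of $T(\mathcal{H})$. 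Norm-approximation then shows $J$ contains every rank-one operator $|\xi\rangle\langle\eta|$, hence $\mathcal{K}(T(\mathcal{H}))\subseteq J$; the reverse containment is automatic since each $AP_\Omega B$ is finite rank. Therefore $J=\mathcal{K}(T(\mathcal{H}))$, and pulling back via the faithful $\pi_F$ yields the isomorphism with $\mathcal{K}$.

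The genuine content of the argument is concentrated in the first step: pinning down $\pi_F(p)$ as the vacuum projection using the relations $pa_i=0$, faithfulness of $\pi_F$, and the structure of the deformed Fock inner product. Once this is secured, the passage to compact operators is a standard consequence of cyclicity of $\Omega$, and no additional input depending on the specific values of $q_{ij}$ is required.
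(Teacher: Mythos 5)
Your proof is correct, but it takes a genuinely different route from the paper's. The paper argues entirely inside $Isom_{q_{ij}}$: using the normal-form basis (Proposition 3) and the relations $pa_i = a_i^*p = 0$ it reduces the ideal to the closure of $\mathrm{span}\{a_\mu p a_\sigma^*\}$, then Gram--Schmidt-orthogonalizes the monomials $a_\mu$ with respect to the positive-definite form $\langle\cdot,\cdot\rangle_{\mathbf{v}}$ of Lemma 3 and verifies directly that $e_{\alpha\beta} = \widehat{a_\alpha}\, p\, \widehat{a_\beta}^*$ is a system of matrix units spanning a dense subalgebra of the ideal; faithfulness of $\pi_F$ is never invoked. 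You instead lean on Theorem 7 to transport the whole question to the Fock space, pin down $\pi_F(p)$ as the vacuum projection $P_\Omega$ (your argument via $\mathrm{Range}(\pi_F(p)) \subseteq \bigl(\sum_i \mathrm{Range}\,\pi_F(a_i)\bigr)^{\perp} = \mathbb{C}\Omega$ is sound, since a nonzero self-adjoint idempotent with range in $\mathbb{C}\Omega$ must be $P_\Omega$), and conclude by cyclicity of $\Omega$ that the ideal is exactly $\mathcal{K}(T(\mathcal{H}))$. Your route is shorter and more conceptual, but it makes the result logically dependent on the faithfulness theorem (a substantial input) and on the concrete creation-operator realization of $\pi_F$; the paper's argument is self-contained at the algebraic level and in addition exhibits explicit matrix units inside $Isom_{q_{ij}}$, which gives a more constructive description of the ideal. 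Both proofs ultimately rest on the same geometric fact, the positive-definiteness of the deformed inner product on the spaces $\mathcal{H}_{\mathbf{v}}$ (Lemma 3): in the paper it makes the orthogonalization possible, in yours it underlies the identification of the orthocomplement of $\bigoplus_{k\geq 1}\mathcal{H}^{\otimes k}$ with $\mathbb{C}\Omega$.
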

\begin{proof}
By Proposition 3 the span of all words of the form $a_{\mu_1} a_{\sigma_1}^* p a_{\mu_2} a_{\sigma_2}^*$ is dense in the ideal generated by $p$. If $\sigma_1 \neq 0$ or $\mu_2 \neq 0$ then this monomial is equal to $0$ since $a_i^* p = 0$, $p a_i = 0$, $i = 1,\ldots,n$. Therefore the span of monomials of the form $a_\mu p a_\sigma^*$ is dense in the ideal generated by $p$.

Let $\mathcal{H} = span\{a_\mu \text{ } | \text{ } \mu \in \mathbb{Z}_{+}^n \}$. Split $\mathcal{H}$ into the subspaces $\mathcal{H}_{\mathbf{v}}$ and equip it with $\langle a_\mu, a_\sigma \rangle_{\mathbf{v}}$ as in Lemma 3. Apply the orthogonalization process to the basis $\{ a_\mu \}$ of $\mathcal{H}_{\mathbf{v}}$ and denote the result by $\{ \widehat{a_\mu} \}$. Consider the following cases:

\begin{enumerate}
    \item If $occ(\alpha) = occ(\beta) = \mathbf{v} \in \mathbb{Z}_{+}^n$ then $p \widehat{a_\beta}^* \widehat{a_\alpha} p = p \langle \widehat{a_\alpha}, \widehat{a_\beta} \rangle_{\mathbf{v}} p = \delta_{\alpha \beta} p$.
    \item If $occ(\alpha) \neq occ(\beta)$ then $\widehat{a_\beta}^* \widehat{a_\alpha}$ is a non-trivial monomial, so $p \widehat{a_\beta}^* \widehat{a_\alpha} p = 0$.
\end{enumerate}

Put $e_{\alpha \beta} = \widehat{a_\alpha} p \widehat{a_\beta}^*$. Then
\[e_{\alpha \beta} e_{\sigma \mu} = a_\alpha p a_\beta^* a_\sigma p a_\mu^* = \begin{cases} a_\alpha p \delta_{\sigma \beta} a_\mu^* & \mbox{ if } occ(\beta) = occ(\sigma) \\ 0, & \mbox{otherwise} \end{cases} = \delta_{\sigma \beta} a_\alpha p a_\mu^* = \delta_{\beta \sigma} e_{\alpha \mu} \]
    
Hence the ideal generated by $p$ is isomorphic to $\mathcal{K}$.

\end{proof}

So it remains to prove the existence of $p \in Isom_{q_{ij}}$ satisfying the conditions of Theorem 8.

\begin{proposition}
There exists an element $p \in Isom_{q_{ij}}$ such that 
\[ p = p^*, \text{ } p^2 = p, \text{ } pa_i = 0, \text{ } i = 1,\ldots, n. \]
\end{proposition}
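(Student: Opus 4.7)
The plan is to take $p = 1^k_{\mathbf{0}^n}$, the unit of the one-dimensional matrix summand $\mathcal{V}^k_{\mathbf{0}^n} \simeq \mathbb{C}$ of $\mathcal{W}_k$ (for any $k \geq 1$). A first observation is that Lemma 6 gives the explicit formula
\[ 1^k_{\mathbf{0}^n} = \sum_{S \subset \{1,\ldots,n\}} (-1)^{|S|} p^{\delta_S}_{\mathbf{0}^n}, \]
valid for every $k \geq 1$ because the constraint $\delta_S \leq \mathbf{k}^n$ is automatic, so the right-hand side is a single, $k$-independent element of $Isom_{q_{ij}}$. Being the unit of a nonzero matrix summand, $p$ is automatically a nontrivial self-adjoint projection, so the only real content is the relation $pa_i = 0$.

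For this I would pass to the Fock representation: by Theorem 7 it suffices to show $\pi_F(p)\pi_F(a_i) = 0$, and the natural target is $\pi_F(p) = P_\Omega$, the rank-one projection onto $\mathbb{C}\Omega \subset T(\mathcal{H})$. Granting this, $P_\Omega \pi_F(a_i) = 0$ follows at once since $\pi_F(a_i)$ strictly raises tensor degree, so its range is orthogonal to $\Omega$; faithfulness of $\pi_F$ then lifts the identity back to $Isom_{q_{ij}}$.

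The heart of the argument is the identification $\pi_F(p) = P_\Omega$, which I would verify summand-by-summand along $T(\mathcal{H}) = \bigoplus_{\mathbf{v} \in \mathbb{Z}_+^n} \mathcal{H}_{\mathbf{v}}$. The key auxiliary identity is that for every $\mathbf{v} \leq \mathbf{k}^n$ and every $\xi_\alpha \in \mathcal{H}_{\mathbf{v}}$,
\[ \pi_F(1^k_{\mathbf{v}})\xi_\alpha = \xi_\alpha. \]
To verify this, expand $1^k_{\mathbf{v}}$ via Lemma 6: each summand $p_{\mathbf{v}}^{\mathbf{v}+\delta_S}$ lies in $\mathcal{W}_{\mathbf{v}+\delta_S}$ and kills $\xi_\alpha$ whenever $\mathbf{v}+\delta_S \not\leq \mathbf{v}$, i.e.\ whenever $S$ is nonempty (because any $a_\sigma^*$ with $occ(\sigma)$ exceeding $occ(\alpha)$ annihilates $\xi_\alpha$ in the Fock space). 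The lone surviving term $p_{\mathbf{v}}^{\mathbf{v}}$ is the unit of $\mathcal{W}_{\mathbf{v}}$, which by Lemma 3 together with the construction of $\pi_{\mathbf{v}}$ in Lemma 4 acts as $\mathrm{id}$ on $\mathcal{H}_{\mathbf{v}}$ under $\pi_F$.

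Specialising at $\mathbf{v} = \mathbf{0}^n$ gives $\pi_F(p)\Omega = \Omega$. For $\xi_\alpha \in \mathcal{H}_{\mathbf{v}}$ with $\mathbf{v} \neq \mathbf{0}^n$, choose $k \geq \max_i v_i$ so that $1^k_{\mathbf{v}}$ and $1^k_{\mathbf{0}^n}$ are both defined in $\mathcal{W}_k$, where they are orthogonal minimal central projections, so $1^k_{\mathbf{0}^n} \cdot 1^k_{\mathbf{v}} = 0$; combining with the auxiliary identity,
\[ \pi_F(p)\xi_\alpha = \pi_F(1^k_{\mathbf{0}^n})\pi_F(1^k_{\mathbf{v}})\xi_\alpha = \pi_F(1^k_{\mathbf{0}^n} \cdot 1^k_{\mathbf{v}})\xi_\alpha = 0. \]
Hence $\pi_F(p) = P_\Omega$, finishing the argument. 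The main obstacle is the intermediate identity $\pi_F(1^k_{\mathbf{v}})|_{\mathcal{H}_{\mathbf{v}}} = \mathrm{id}$; once that is extracted from Lemma 6, orthogonality of the central projections of $\mathcal{W}_k$ disposes of every other summand of $T(\mathcal{H})$ uniformly.
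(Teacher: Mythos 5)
Your proposal is correct, but it takes a genuinely different and substantially heavier route than the paper. The paper's proof is a three-line algebraic argument: it sets $\mathcal{B} = C^*(a_1a_1^*,\ldots,a_na_n^*)$, notes that $\mathcal{B}\subset\mathcal{W}_1$ is finite-dimensional and hence has a unit $1_{\mathcal{B}}$, and then computes directly $1_{\mathcal{B}}a_i = 1_{\mathcal{B}}a_i(a_i^*a_i) = (1_{\mathcal{B}}a_ia_i^*)a_i = (a_ia_i^*)a_i = a_i$, so $p = 1-1_{\mathcal{B}}$ works; the only external inputs are the isometry relation and Corollary 2, and no representation theory is needed. You instead take $p = 1^k_{\mathbf{0}^n}$, the minimal central projection of $\mathcal{W}_k$ sitting over $\mathbf{v}=\mathbf{0}^n$, and verify $pa_i=0$ by identifying $\pi_F(p)$ with the vacuum projection $P_\Omega$ and invoking faithfulness (Theorem 7); this forces you to carry the full apparatus of Section 5 (Lemma 6, the representations $\pi_{\mathbf{v}}$ and $\pi_k$, the decomposition of $\mathcal{W}_k$) plus the compatibility of $\pi_{\mathbf{v}}$ with $\pi_F$ via Lemma 3, each step of which checks out but which you partially gloss (e.g.\ that $\pi_F(a_\mu a_\sigma^*)\xi_\alpha = \langle\xi_\alpha,\xi_\sigma\rangle_{Fock}\,\xi_\mu$ for $occ(\mu)=occ(\sigma)=occ(\alpha)$, which is what makes the unit of $\mathcal{W}_{\mathbf{v}}$ act as the identity on the Fock subspace $\mathcal{H}_{\mathbf{v}}$). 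What your approach buys in exchange is more information: the explicit $k$-independent formula $p = \sum_{S}(-1)^{|S|}p^{\delta_S}_{\mathbf{0}^n}$ and, more usefully, the identification $\pi_F(p)=P_\Omega$, which makes it transparent why the ideal generated by $p$ in Theorem 8 should be the compacts (in fact both constructions yield the same element, since the paper's $1-1_{\mathcal{B}}$ is also sent to $P_\Omega$ by $\pi_F$). If you want the shortest self-contained argument, the paper's is preferable; yours is a legitimate alternative provided you are willing to make Proposition 6 logically dependent on Theorem 7 and Section 5, which the paper's version is not.
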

\begin{proof}
Let $\mathcal{B}$ be the $C^*$-subalgebra generated by $a_i a_i^*$, $i = 1,\ldots,n$. Since $\mathcal{B} \subset \mathcal{W}_1$, it is finite-dimensional. Every finite-dimensional $C^*$-algebra is unital. Obviously, the unit of $Isom_{q_{ij}}$ does not belong to $\mathcal{B}$. Let $1_{\mathcal{B}}$ be the unit of $\mathcal{B}$. Then for any $i = 1,\ldots,n$ we set
\[(1 - 1_{\mathcal{B}}) a_i = a_i - 1_{\mathcal{B}}a_i = a_i - 1_{\mathcal{B}}a_i (a_i^* a_i) = a_i - (1_{\mathcal{B}}a_i a_i^*) a_i = a_i - (a_i a_i^*) a_i = 0. \]

Thus $p = 1 - 1_{\mathcal{B}}$ is a nontrivial projection satisfying all required conditions.

\end{proof}

\section*{Acknowledgements}
We would like to express our gratitude to Daniil Proskurin, who introduced us to the subject of Wick algebras. His patient guidance was very important for us to make the first steps in mathematics.

Also we thank Lyudmila Turowska and Vasyl Ostrovskyi for reading and giving valuable comments.

\medskip
\printbibliography

\end{document}